\newtheorem{theorem}{Theorem}[section]
\newtheorem{corollary}[theorem]{Corollary}
 \newtheorem{lemma}[theorem]{Lemma}
 \newtheorem{proposition}[theorem]{Proposition}
 \theoremstyle{definition}
 \newtheorem{definition}[theorem]{Definition}
 \theoremstyle{remark}
 \newtheorem{remark}[theorem]{Remark}
 \numberwithin{equation}{subsection}
\newcommand{\cat}{{\sf {cat}}} 
\newcommand{\secat}{{\sf {secat}}} 
\newcommand{\tc}{{\sf {TC}}} 
\newcommand{\D}{{\sf {D}}}
\begin{document}

\title{Relative sectional category revisited}

\author{J.M. Garc\'{\i}a-Calcines\footnote{Universidad de La Laguna,
Facultad de Ciencias, Departamento de Matem\'aticas,
Estad\'{\i}stica e I.O., 38271 La Laguna, Spain. E-mail:
\texttt{jmgarcal@ull.edu.es}} }

\maketitle

\begin{abstract}
The concept of relative sectional category expands upon classical sectional category theory by incorporating the pullback of a fibration along a map. Our paper aims not only to explore this extension but also to thoroughly investigate its properties. We seek to uncover how the relative sectional category unifies several homotopic numerical invariants found in recent literature. These include the topological complexity of maps according to Murillo-Wu or Scott, relative topological complexity as defined by Farber, and homotopic distance for continuous maps in the sense of Mac\'{\i}as-Virg\'os and Mosquera-Lois, among others.
\end{abstract}

\vspace{0.5cm}
\noindent{2020 \textit{Mathematics Subject Classification} : 55M30, 55P99, 55M15.}\\
\noindent{\textit{Keywords} : sectional category, relative sectional category, homotopy pullback, ANR space. }
\vspace{0.2cm}

\section*{Introduction}
For a fibration $p:E\rightarrow B,$ the sectional category, denoted $\secat (p)$, is defined as the smallest integer $k$ such that $B$ admits a cover composed of $k+1$ open subsets, each of which has a section of $p$. By requiring a homotopy section instead of just a section, this definition extends the concept of sectional category to any map. It serves as both a variant and a generalization of the Lusternik-Schnirelmann category (or L-S category) since $\secat (p)=\cat(B)$ when $E$ is contractible. Introduced for fibrations by Schwarz \cite{Sch} in the 1960s under the name ``genus" (later renamed by James), the sectional category has emerged as a valuable tool not only in questions concerning bundle classification and the embedding problem or the complexity of root-finding problems for algebraic equations but also, more recently, in the study of motion planning problems in robotics.
To explore Hopf invariants within the framework of sectional category theory, J. Gonz\'alez, M. Grant, and L. Vandembroucq introduced in \cite{GGV} a notion of relative sectional category . For a given a fibration $p:E\rightarrow B$ and a continuous map $f:X\rightarrow B,$ the relative sectional category of $p$ with respect to $f$ is defined as the sectional category of $f^*p$, where $f^*p$ denotes the pullback of $p$ along $p$. Although this invariant exhibits intriguing properties, its exploration remains somewhat limited in their work, where the authors focus on the case where $f$ is an inclusion. This paper seeks to explore further the study of relative sectional category, showcasing its potential to unify various homotopy numerical invariants. These include the classical sectional category, the Lusternik-Schnirelmann category of maps, the relative topological complexity of M. Farber \cite{F2}, the topological complexity of maps as defined by Murillo-Wu and Scott (refer to \cite{M-W}, \cite{Sct}), and the homotopic distance as defined by Mac\'{\i}as-Virg\'os and Mosquera-Lois \cite{MV-ML}, among others.

We have structured the content of this article as follows:
Firstly, in Section 1, we introduce the main concept of our study. Our approach is broader than the original framework presented by J. Gonz\'alez, M. Grant, and L. Vandembroucq, specifically employing the homotopical pullback construction to consider maps that are not necessarily fibrations.
In Section 2, we provide a broad and representative sampling of homotopical numerical invariants, utilizing the notion of relative sectional category.
In Section 3, we establish crucial properties concerning the relative sectional category. We begin by recalling significant bounds provided by J. Gonz\'alez, M. Grant, and L. Vandembroucq, and then introduce new properties, including homotopical invariance and an inequality involving products. Additionally, we also present several intriguing inequalities that hold under certain conditions.
In Section 4, we offer an axiomatized characterization reminiscent of the Whitehead-Ganea framework, where the join construction, a blend of pushout and homotopical pullback of two maps, plays a pivotal role.
Finally, in Section 5, we explore a generalized version of the relative sectional category, considering coverings that are not necessarily open. We demonstrate that equality between the relative sectional category and its generalized version holds provided that all involved spaces have the homotopy type of a CW-complex, or equivalently, an ANR space.

\section{Introducing relative sectional category}

Homotopy limits and colimits are indispensable tools in homotopy theory.
In this work, we will only use homotopy pullbacks and pushouts, and presume the reader's familiarity with these objects along with their key properties. Nonetheless, we will offer a brief recap of these concepts here, and for those interested in further exploration, we provide as references \cite{M}, \cite{D}, and \cite{B-K}.
Given a homotopy commutative square
$$\xymatrix{
  P \ar[d]_{\alpha } \ar[r]^{\beta } & B \ar[d]^{g} \\
  A \ar[r]_{f} & C   } $$
\noindent equipped with a homotopy $H:f\circ \alpha \simeq g\circ \beta $,
another homotopy commutative square exists
$$\xymatrix{
  E_{f,g} \ar[d]_{p} \ar[r]^{q}  & B \ar[d]^{g} \\
A \ar[r]_{f} & C   }$$
\noindent where $E_{f,g}$ is the space defined as $E_{f,g}=\{(a,b,\theta )
\in A\times B\times C^I \, ; \: \theta (0)=f(a)\, , \:
\theta (1)=g(b) \}$ and $p,q$ are the obvious restrictions of the
projections. This square is equipped with the homotopy $G$ given
by $(a,b,\theta,t) \mapsto \theta(t).$ Then, a map (referred to as the \textit{whisker map})
$w:P\rightarrow E_{f,g}$ exists, defined by
$w(x)=(\alpha (x), H(x,-), \beta (x))$ satisfying $p\circ w=\alpha
,$ $q\circ w=\beta $ and $G\circ (w\times id)=H.$ The first square is termed a
\textit{homotopy pullback} if the whisker map $w$ is a
homotopy equivalence; the second square is referred to as the \textit{standard homotopy pullback}
of $f$ and $g.$ The Eckmann-Hilton dual notion corresponds to the \textit{homotopy
pushout}. Homotopy pullbacks and homotopy pushouts may also be equivalently
defined through factorization properties or by the weak universal
property of homotopy (co-)limits.

\medskip

We will also assume that the reader is familiar with sectional category theory. For convenience, we will summarize some well-known key properties of the sectional category that will be used throughout this work in the following result:

\begin{lemma}
Consider the following homotopy commutative square of spaces and maps:
$$\xymatrix{
{E} \ar[r]^{\alpha } \ar[d]_p & {E'} \ar[d]^{p'} \\
{B} \ar[r]_{\beta } & {B'.} }$$
\begin{enumerate}
\item[(a)] If $\alpha $ and $\beta $ are homotopy equivalences, then $\mbox{secat}(p)=\mbox{secat}(p')$.

\item[(b)] If $B=B'$ and $\beta =id_B$, then $\mbox{secat}(p)\geq \mbox{secat}(p')$.

\item[(c)] If the square is a homotopy pullback, then $\mbox{secat}(p)\leq \mbox{secat}(p')$.
\end{enumerate}
\end{lemma}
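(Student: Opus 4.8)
The plan is to argue directly from the definition of sectional category. Writing $\iota_U\colon U\hookrightarrow B$ for the inclusion of an open set, $\secat(p)\le k$ means that $B$ admits an open cover $U_0,\dots,U_k$ together with maps $s_i\colon U_i\to E$ (local homotopy sections) satisfying $p\circ s_i\simeq \iota_{U_i}$. I would prove (b) and (c) first and then deduce (a) from (c). For (b), since $\beta=\mathrm{id}_B$ the homotopy filling the square reads $p'\circ\alpha\simeq p$. Starting from an optimal cover $U_0,\dots,U_k$ of $B$ with local homotopy sections $s_i$ of $p$, I would simply post-compose with $\alpha$: the maps $\alpha\circ s_i\colon U_i\to E'$ satisfy $p'\circ\alpha\circ s_i\simeq p\circ s_i\simeq \iota_{U_i}$, so they are local homotopy sections of $p'$ over the \emph{same} cover. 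Hence $\secat(p')\le k=\secat(p)$.

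For (c), let $V_0,\dots,V_k$ be an optimal cover of $B'$ with local homotopy sections $\sigma_i\colon V_i\to E'$ of $p'$, and let $G_i\colon p'\circ\sigma_i\simeq \iota_{V_i}$ be the associated homotopies. I pull the cover back to $U_i:=\beta^{-1}(V_i)$, an open cover of $B$. The key device is the whisker map $w\colon E\to E_{\beta,p'}$ into the standard homotopy pullback $E_{\beta,p'}=\{(b,e',\theta): \theta(0)=\beta(b),\ \theta(1)=p'(e')\}$, which by hypothesis is a homotopy equivalence and satisfies $\pi\circ w=p$ for the projection $\pi\colon E_{\beta,p'}\to B$. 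Over $U_i$ I can build a genuine (strict) section $\tilde s_i$ of $\pi$ by setting $\tilde s_i(x)=(x,\sigma_i(\beta(x)),\theta_x)$, where $\theta_x(t)=G_i(\beta(x),1-t)$ is the reversed section-homotopy; this path runs from $\beta(x)$ to $p'(\sigma_i(\beta(x)))$, so $\tilde s_i$ indeed lands in $E_{\beta,p'}$ and $\pi\circ\tilde s_i=\iota_{U_i}$. Choosing a homotopy inverse $v$ of $w$ and putting $s_i:=v\circ\tilde s_i$ gives $p\circ s_i=\pi\circ w\circ v\circ\tilde s_i\simeq \pi\circ\tilde s_i=\iota_{U_i}$, so $s_i$ is a local homotopy section of $p$ over $U_i$. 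Therefore $\secat(p)\le k=\secat(p')$.

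Finally, for (a) I would first observe that a homotopy commutative square whose two horizontal maps are homotopy equivalences is automatically a homotopy pullback: by homotopy invariance of the standard homotopy pullback the projection $\rho\colon E_{\beta,p'}\to E'$ is a homotopy equivalence (since $\beta$ is), and as $\rho\circ w=\alpha$ with $\alpha$ and $\rho$ equivalences, the two-out-of-three property forces $w$ to be a homotopy equivalence. Part (c) then yields $\secat(p)\le \secat(p')$. For the opposite inequality I form the ``inverse'' square obtained by replacing $\alpha,\beta$ with homotopy inverses $\bar\alpha,\bar\beta$; a short computation using $\beta\circ p\simeq p'\circ\alpha$ shows this square is again homotopy commutative, and it is again a homotopy pullback, so (c) gives $\secat(p')\le\secat(p)$. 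Combining the two inequalities proves the equality.

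I expect the only genuinely delicate step to be the construction in (c): one must produce the path coordinate of the standard homotopy pullback correctly from the section-homotopy $G_i$ (with the right orientation) and then transport the resulting strict section of $\pi$ along a homotopy inverse of $w$, keeping careful track of the composition of homotopies so that $p\circ s_i\simeq\iota_{U_i}$ survives. In part (a) the one point needing justification is the passage from ``horizontal maps are homotopy equivalences'' to ``the square is a homotopy pullback'', which rests on the homotopy invariance of the standard homotopy pullback; everything else is formal.
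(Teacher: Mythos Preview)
Your argument is correct. However, there is nothing to compare against: the paper does not prove this lemma. It is stated explicitly as a recap of ``well-known key properties of the sectional category'' and is given without proof, the reader being assumed familiar with sectional category theory.

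That said, the proof you outline is the standard one, and each step goes through. In (b) the post-composition $\alpha\circ s_i$ does give local homotopy sections of $p'$ over the same cover, exactly as you say. In (c), your construction of $\tilde s_i(x)=(x,\sigma_i(\beta(x)),\theta_x)$ with $\theta_x(t)=G_i(\beta(x),1-t)$ is precisely the right element of the standard homotopy pullback (the endpoint check $\theta_x(0)=\beta(x)$, $\theta_x(1)=p'(\sigma_i(\beta(x)))$ is correct), and the transport along a homotopy inverse $v$ of the whisker map $w$ yields $p\circ s_i=\pi\circ w\circ v\circ\tilde s_i\simeq\pi\circ\tilde s_i=\iota_{U_i}$ as claimed; note that $\pi\circ w=p$ holds strictly with the paper's conventions for the whisker map. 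Your derivation of (a) from (c) is also fine: the observation that a homotopy commutative square with horizontal equivalences is automatically a homotopy pullback is a standard consequence of the homotopy invariance of the standard homotopy pullback, and your two-out-of-three argument via $\rho\circ w=\alpha$ is the usual way to see it. Forming the ``inverse'' square with chosen homotopy inverses $\bar\alpha,\bar\beta$ and checking it is again homotopy commutative (which follows from $\beta\circ p\simeq p'\circ\alpha$ by pre- and post-composing with the inverses) then gives the reverse inequality.
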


\bigskip
And now we proceed to introduce the concept of relative sectional category, first defined by J. Gonz\'alez, M. Grant, and L. Vandembroucq in \cite{GGV}. Here we will provide a slight variation in order to generalize to maps that are not necessarily fibrations.

\begin{definition}
Consider a cospan $K\stackrel{\varphi }{\longrightarrow }X\stackrel{p}{\longleftarrow }A$ of maps. A subset $Z\subset K$ is said to be $\varphi $-sectional if there exists a map $s:Z\rightarrow A$ such that $p\circ s\simeq \varphi _{|Z}$; that is, there is a homotopy commutative diagram
$$\xymatrix{
{Z} \ar[rr]^{\varphi _{|Z}} \ar[dr]_s & & {X} \\
 & A. \ar[ur]_p & }$$
The \emph{sectional category of $p$ relative to $\varphi $}, $\secat _{\varphi }(p)$, is the least non-negative integer $n$ such that there is an open cover $K=U_0\cup U_1\cup \cdots \cup U_n$ by $n+1$ open $\varphi $-sectional subsets. If such an integer does not exist then we set $\secat _{\varphi }(p)=\infty .$
\end{definition}

It is obvious that given $p,q:A\rightarrow X$ and $\varphi ,\psi :K\rightarrow X$ maps such that $p\simeq q$ or $\varphi \simeq \psi $, then $\secat _{\varphi }(p)=\secat _{\psi }(q).$

\begin{proposition}\label{pullback}
Let $K\stackrel{\varphi }{\longrightarrow }X\stackrel{p}{\longleftarrow }A$ be a cospan of maps. Then $\secat _{\varphi }(p)=\secat (\overline{p})$, where $\overline{p}$ denotes the base change of the map $p$ in the homotopy pullback of $p$ along $\varphi $:
$$\xymatrix{
{P_{\varphi ,p}} \ar[r]^{\overline{\varphi }} \ar[d]_{\overline{p}}  & {A} \ar[d]^p \\
{K} \ar[r]_{\varphi }  & {X.}  }$$
\end{proposition}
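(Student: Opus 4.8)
The plan is to reduce the equality of the two invariants to an open-set-by-open-set comparison, exploiting the concrete model $E_{\varphi,p}$ of the standard homotopy pullback recalled at the start of the section. Since both $\secat_{\varphi}(p)$ and $\secat(\overline{p})$ are defined as minima over open covers of the \emph{same} space $K$, it suffices to prove that, for an arbitrary open subset $U\subseteq K$, the set $U$ is $\varphi$-sectional if and only if $\overline{p}$ admits a homotopy section over $U$. Granting this equivalence, the admissible covers for the two invariants coincide set-by-set, hence so do their minimal cardinalities, and the equality follows at once.

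First I would settle model-independence. The homotopy pullback is only well defined up to homotopy equivalence, so I would work with the explicit standard model $E_{\varphi,p}=\{(k,a,\theta)\,;\,\theta(0)=\varphi(k),\ \theta(1)=p(a)\}$, with $\overline{p}(k,a,\theta)=k$ and $\overline{\varphi}(k,a,\theta)=a$, together with the canonical homotopy $G\colon p\circ\overline{\varphi}\simeq\varphi\circ\overline{p}$ given by $G((k,a,\theta),t)=\theta(t)$. Any other model is related to this one by a homotopy equivalence over $K$, so by Lemma 1.1(a) (applied with the identity on the base $K$) the value $\secat(\overline{p})$ is the same for every model; this justifies computing with $E_{\varphi,p}$.

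For the forward implication, suppose $U$ is $\varphi$-sectional, with a map $s\colon U\to A$ and a homotopy $H\colon U\times I\to X$ from $\varphi_{|U}$ to $p\circ s$. I would define a genuine section $\sigma\colon U\to E_{\varphi,p}$ of $\overline{p}$ by $\sigma(k)=(k,s(k),H(k,-))$, which lands in $E_{\varphi,p}$ precisely because $H(k,0)=\varphi(k)$ and $H(k,1)=p(s(k))$, and which satisfies $\overline{p}\circ\sigma=\iota_{U}$ on the nose (so in particular a homotopy section). Conversely, given a homotopy section $\sigma\colon U\to E_{\varphi,p}$ of $\overline{p}$, I would set $s:=\overline{\varphi}\circ\sigma\colon U\to A$ and chain homotopies: $p\circ s=p\circ\overline{\varphi}\circ\sigma\simeq\varphi\circ\overline{p}\circ\sigma$ via $G$, and $\varphi\circ\overline{p}\circ\sigma\simeq\varphi_{|U}$ since $\overline{p}\circ\sigma\simeq\iota_{U}$. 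This exhibits $U$ as $\varphi$-sectional.

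The only genuinely delicate point, which I expect to require care rather than ingenuity, is matching conventions: one must confirm that $\secat(\overline{p})$, where $\overline{p}$ is in general not a fibration, is computed via \emph{homotopy} sections over open sets (as the introduction's extension of sectional category to arbitrary maps prescribes), so that the reverse implication, which only yields $\overline{p}\circ\sigma\simeq\iota_{U}$, genuinely suffices. Once the two definitions are aligned, the two directions above give an exact correspondence between open $\varphi$-sectional subsets of $K$ and open subsets over which $\overline{p}$ admits a homotopy section, and the equality $\secat_{\varphi}(p)=\secat(\overline{p})$ is immediate.
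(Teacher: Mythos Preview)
Your proposal is correct and follows essentially the same approach as the paper: both arguments establish, open set by open set, that $U\subseteq K$ is $\varphi$-sectional if and only if $\overline{p}$ admits a homotopy section over $U$. The only difference is in presentation---the paper phrases the forward direction abstractly via the weak universal property of the homotopy pullback (the induced whisker map), whereas you write down the explicit formula $\sigma(k)=(k,s(k),H(k,-))$ in the standard model $E_{\varphi,p}$, which is precisely that whisker map; your added remarks on model-independence and the homotopy-section convention are extra care the paper leaves implicit.
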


\begin{proof}
Let $U\subset $ be an open subset together with a map $s:U\rightarrow A$ satisfying $p\circ s\simeq \varphi _{|U}.$ Then, by the weak universal property of the homotopy pullback we have an induced map
$$
\xymatrix{
{U}  \ar@/^1pc/[drr]^{s} \ar@/_1pc/[ddr]_{inc} \ar@{.>}[dr]^{\sigma } & & \\
 & {P_{\varphi ,p}} \ar[r] \ar[d]_{\overline{p}} & {A} \ar[d]^p \\
 & {K} \ar[r]_{\varphi } & {X}
}$$
\noindent which is a local homotopy section of the map $\overline{p}$ on the open subset $U$. Conversely, given such a local homotopy section $\sigma :U\rightarrow P_{p,\varphi }$ the composite of $\sigma $ with $P_{\varphi ,p} \rightarrow A$ (the base change of $\varphi $) gives us that $U$ is a $\varphi $-sectional open subset.
\end{proof}

\begin{remark}
We want to point out that, in the previous result, we did not explicitly specify the homotopy pullback space $P_{\varphi, p}$ nor the map $\overline{p}$ over the space $K$, as different choices yield homotopically equivalent continuous maps. In this sense, one could consider, if desired, the standard homotopy pullback of $\varphi$ with $p$, $E_{\varphi ,p}$, although this is not strictly necessary for the development of our theory.
\end{remark}

Observe that, in particular, when $p$ is a fibration we can take the honest pullback,
$$\xymatrix{
{K\times _X A} \ar[r] \ar[d]_{\varphi ^*p}  & {A} \ar[d]^p \\
{K} \ar[r]_{\varphi }  & {X}  }$$
\noindent which is also a homotopy pullback. In this case $\varphi ^*p$ is a fibration as well.

\begin{corollary}
Let $K\stackrel{\varphi }{\longrightarrow }X\stackrel{p}{\longleftarrow }A$ be a cospan of maps, where $p$ is a fibration. Then
$$\secat _{\varphi }(p)=\secat (\varphi ^*p).$$
\end{corollary}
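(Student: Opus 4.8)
The plan is to obtain the Corollary as an immediate specialization of Proposition \ref{pullback}, the only extra ingredient being the classical fact that, for a fibration, the strict pullback already computes the homotopy pullback. First I would invoke Proposition \ref{pullback}, which yields $\secat_{\varphi}(p)=\secat(\overline{p})$, where $\overline{p}$ is the base change of $p$ in any homotopy pullback of $p$ along $\varphi$. By the preceding Remark this quantity does not depend on the chosen model: two different homotopy pullbacks of the same cospan produce base-change maps over $K$ that are homotopy equivalent, and such maps have equal sectional category by part (a) of the Lemma. So the task reduces to showing that $\varphi^* p$ is one admissible choice of $\overline{p}$.

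The heart of the argument is therefore to exhibit the strict pullback square, with corner $K\times_X A$ and base change $\varphi^* p$, as a genuine homotopy pullback. This amounts to checking that the associated whisker map $w\colon K\times_X A\rightarrow E_{\varphi,p}$ — which sends a pair $(k,a)$ with $\varphi(k)=p(a)$ to the point of the standard homotopy pullback formed by $k$, $a$ and the constant path at $\varphi(k)=p(a)$ — is a homotopy equivalence. The hypothesis that $p$ is a fibration is exactly what guarantees this: given a point $(k,a,\theta)\in E_{\varphi,p}$, the homotopy lifting property of $p$ allows one to lift the path coordinate $\theta$ and thereby produce, up to homotopy, an inverse to $w$. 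This is precisely the standard statement that a strict pullback along a fibration is a homotopy pullback, so $w$ is a homotopy equivalence.

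With this in hand, $\varphi^* p$ serves as a valid $\overline{p}$, whence $\secat(\overline{p})=\secat(\varphi^* p)$, and combining with the first step gives $\secat_{\varphi}(p)=\secat(\varphi^* p)$, as claimed. I expect the only real obstacle to be the verification that $w$ is a homotopy equivalence; but since this is a well-known property of fibrations that may simply be cited, the Corollary follows at once from Proposition \ref{pullback} and the model-independence recorded in the Remark.
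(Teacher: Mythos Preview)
Your proposal is correct and follows exactly the paper's approach: the paper simply observes, just before stating the Corollary, that when $p$ is a fibration the honest pullback square is already a homotopy pullback, so one may take $\overline{p}=\varphi^*p$ in Proposition~\ref{pullback}. Your additional explanation of why the whisker map $w$ is a homotopy equivalence is more detail than the paper supplies, but it is the same standard fact being invoked.
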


\begin{remark}
In practice, when dealing with a setup where $K\stackrel{\varphi }{\longrightarrow }X\stackrel{p}{\longleftarrow }A$ forms a cospan of maps where $p$ is not a fibration, to compute $\secat _{\varphi }(p)$, one first replaces $p$ with its fibrational substitute, $p':A'\rightarrow X$. Then, the honest pullback of $p'$ along $\varphi $ is considered:
$$\xymatrix{
{P_{\varphi ,p'}} \ar[r] \ar[d]_{\varphi ^*p'} & {A'} \ar[d]^{p'} & {A} \ar[l]_{\simeq } \ar[dl]^p \\
{K} \ar[r]_{\varphi } & {X} &
}$$
\noindent so that we can consider $\secat _{\varphi }(p)=\secat (\varphi ^*p')$.
\end{remark}

\section{Remarkable examples of relative sectional category}

The notion of relative sectional category encompasses a large number of interesting examples. Let us take a few of them.
We first note that, given a map $p:A\rightarrow X$ it is obvious that $$\secat (p)=\secat _{id _X}(p).$$ In particular, both topological complexity and Lusternik-Schnirelmann category are particular cases of relative sectional category. Moreover, given $\varphi :A\rightarrow X$ be a map we obviously have the identity $$\cat (\varphi )=\secat _{\varphi }(*\rightarrow X)$$ \noindent where $\cat (\varphi )$ denotes the usual Lusternik-Schnirelmann category of the map $\varphi $.

\medskip
Apart from the aforementioned direct examples, perhaps the foremost highlighted example of the relative sectional category is the relative topological complexity in the sense of Farber \cite{F2}, also known as subspace topological complexity. Given $X$ a (path-connected) space and $A\subset X\times X$ a subspace, the subspace topological complexity of $A$ in $X$, denoted by $\tc _X(A)$, is defined as
$$\tc _X(A):=\secat ((\pi _X)_{|A})$$
\noindent where $(\pi _X)_{|A}:\pi _X^{-1}(A)\rightarrow A$ denotes the restriction to paths in $X$ whose pair of initial and terminal points lies in $A$. Observe that $\pi _X^{-1}(A)$ fits in the (homotopy) pullback
$$\xymatrix{
{\pi _X^{-1}(A)}  \ar[d]_{(\pi _X)_{|A}} \ar@{^{(}->}[r] & {X^I} \ar[d]^{\pi _X} \\
{A} \ar@{^{(}->}[r]_{inc_A} & {X\times X}
}$$
\noindent and therefore, it is the relative sectional category $\tc _X(A)=\secat _{inc_A}(\pi _X).$ The concept was originally introduced by M. Farber \cite{F2}, and has since found application in the works of various authors. Notable among them are M. Grant \cite{Gr} and Boehnke-Scheirer-Xue \cite{B-S-X}. An intriguing variation is the notion of the topological complexity of a pair, as introduced and developed by R. Short in \cite{Sh}; for a given space $X$ and a subspace $Y \subset X$, R. Short defined $\tc(X,Y)$ as $\tc_X(X \times Y)$. A natural extension of this concept is the relative sectional category with respect to an inclusion, which has been utilized by J. Gonz\'alez, M. Grant, and L. Vandembroucq in \cite{GGV} for their investigation of Hopf invariants for sectional category. Specifically, for a fibration $p: A \rightarrow X$ and a subspace $K \subset X$, they consider $\secat_K(p)$ as the relative sectional category $\secat_{\text{inc}_K}(p)$, where $\text{inc}_K: K \hookrightarrow X$ denotes the inclusion.

\bigskip
Another source of examples of relative sectional category is given by the study of topological complexity for maps. In this regard, the topological complexity of a map $f:X\rightarrow Y$ in the sense of Scott \cite{Sct}, $\tc ^{Sc}(f)$, also known as the pullback topological complexity of $f$, is introduced as the sectional category of the base change $\pi _Y^*:P_{f\times f,\pi _Y}\rightarrow X\times X$ in the pullback
$$\xymatrix{
{P_{f\times f,\pi _Y}} \ar[d]_{\pi _Y^*} \ar[rr] & & {Y^I} \ar[d]^{\pi _Y} \\
{X\times X} \ar[rr]_{f\times f} & & {Y\times Y}
}$$
\noindent where $\pi _Y$ is the fibration defined as $\pi _Y(\alpha )=(\alpha (0),\alpha (1)),$ for all $\alpha \in Y^I.$ Therefore,  $\tc ^{Sc}(f)=\secat _{f\times f}(\pi _Y)$. Observe that we also have the identity $\tc ^{Sc}(f)=\secat _{f\times f}(\Delta _Y)$, where $\Delta _Y:Y\rightarrow Y\times Y$ denotes the diagonal map.
Similarly, Scott introduced the mixed topological complexity of a map $f:X\rightarrow Y$, denoted as $\tc ^{\frac{1}{2}}(f)$, as the sectional category of the base change in the pullback
$$\xymatrix{
{P_{id_Y\times f,\pi _Y}} \ar[d]_{\pi _Y^*} \ar[rr] & & {Y^I} \ar[d]^{\pi _Y} \\
{Y\times X} \ar[rr]_{id_Y\times f} & & {Y\times Y.}
}$$ Therefore, $\tc ^{\frac{1}{2}}(f)=\secat _{id_Y\times f}(\pi _Y)=\secat _{id_Y\times f}(\Delta _Y)$.

\bigskip
A. Murillo and J. Wu defined in \cite{M-W} an engaging notion called $f$-sectional category. Namely, given maps $E\stackrel{p}{\longrightarrow }B\stackrel{f}{\longrightarrow }X$
an open subset $U\subset X$ is said to be $f$-categorical if there is a map $s:U\rightarrow E$ such that $fps\simeq f_{|U}$, that it, the following triangle is commutative up to homotopy:
$$\xymatrix{
{U} \ar[rr]^{f_{|U}} \ar[dr]_s & & {X} \\
 & {E.} \ar[ur]_{fp} }$$

The $f$-sectional category of $p$, denoted here as $\secat _f^{MW}(p),$ is the least integer $n$ for which $B$ admits a covering of $n+1$ $f$-categorical open sets. If no such integer exists then we set $\secat _f^{MW}(p)=\infty .$ It is immediate that we have a cospan $B\stackrel{f}{\longrightarrow }X\stackrel{fp}{\longleftarrow }E$ and that the $f$-sectional category is nothing else than the relative sectional category
$\secat _f^{MW}(p)=\secat _f(fp).$
This way, another way to characterize the $f$-sectional category is by considering the sectional category of the map $\overline{fp}$ within the homotopy pullback of $fp$ along $f$
$$\xymatrix{
{P_{f,fp}} \ar[r] \ar[d]_{\overline{fp}} & {E} \ar[d]^{fp} \\
{B} \ar[r]_f & {X.} }$$
The $f$-sectional category provided the foundational framework for introducing what is known as the topological complexity of the work map. In essence, for a given map $f:X\rightarrow Y$, the topological complexity of $f$ according to Murillo-Wu is defined as $$\tc ^{MW}(f):=\secat ^{MW} _{f\times f}(\pi _X)=\secat  _{f\times f}((f\times f)\pi _X)$$ \noindent where $\pi _X:X^I\rightarrow X\times X$ represents the bi-evaluation fibration. It's worth noting that Scott's topological complexity aligns numerically with the homotopy invariant version of Murillo-Wu's complexity, despite their different approaches to modeling motion planning problems.

\bigskip
Finally, an interesting example of relative sectional category is given by the so-called homotopic distance between two maps, a concept introduced by E. Mac\'{\i}as-Virg\'os and D. Mosquera-Lois in \cite{MV-ML}. Consider two maps, $\varphi, \psi :X\rightarrow Y.$ The homotopic distance $\D(\varphi,\psi)$ between $\varphi$ and $\psi $ is the smallest non-negative integer $n\geq 0$ for which there exists an open covering $X=U_0\cup \cdots \cup U_n$ such that the restrictions $\varphi |_{U_j}$ and $\psi |_{U_j}$ are homotopic maps, for all $j\in \{0,1,\cdots ,n\}.$ If no such covering exists, then we set $\D(\varphi,\psi) =\infty .$

There is a useful characterization in terms of the sectional category. To establish this, we consider the following pullback:
$$\xymatrix{
{\mathcal{P}(\varphi ,\psi)} \ar[d]_{\pi _Y^*} \ar[r] & {Y^I} \ar[d]^{\pi _Y} \\
{X} \ar[r]_{(\varphi ,\psi )} & {Y\times Y.} }$$
Then, E. Mac\'{\i}as-Virg\'os and D. Mosquera-Lois proved in \cite[Th. 2.7]{MV-ML}
$\D(\varphi,\psi)=\secat (\pi _Y^*:\mathcal{P}(\varphi ,\psi)\rightarrow X)$. Therefore, with our terminology, we have that
$$\D(\varphi,\psi)=\secat _{(\varphi ,\psi)}(\pi _Y)=\secat _{(\varphi ,\psi)}(\Delta _Y).$$

\section{Properties of relative sectional category}

In this subsection, we will explore interesting properties of the relative sectional category.
We begin with the following result, which has already been established in \cite{GGV}, providing elegant bounds for the relative sectional category. However, here we present their proofs for the reader's convenience.

\begin{proposition}\cite[Prop. 3.8]{GGV}
Let $K\stackrel{\varphi }{\longrightarrow }X\stackrel{p}{\longleftarrow }A$ be a cospan of maps. Then:
\begin{enumerate}
\item Let $x_1,\cdots ,x_k\in H^*(X)$ be cohomology classes with any coefficients such that $p^*(x_1)=\cdots =p^*(x_k)=0$ and $\varphi ^*(x_1...x_k)\neq 0.$ Then
$\secat _{\varphi }(p)\geq k.$

\item If $p$ is an $r$-equivalence ($r\geq 0$), $K$ and $X$ are path-connected spaces and $K$ has the homotopy type of a CW-complex, then
$$\secat _{\varphi }(p)\leq \frac{hdim(K)}{r+1}.$$
Here, $hdim(K)$ denotes the homotopy dimension of $K$, which refers to the smallest dimension of CW-complexes having the homotopy type of $K$.

\end{enumerate}
\end{proposition}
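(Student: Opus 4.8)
The plan is to reduce both bounds to statements about the sectional category of the single map $\bar p\colon P_{\varphi,p}\to K$, the base change of $p$ along $\varphi$, exploiting the identity $\secat_\varphi(p)=\secat(\bar p)$ from Proposition \ref{pullback}. Once this reduction is in place, each item becomes an instance of a classical bound for the sectional category of one map, applied to $\bar p$.

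For item (1), I would use the standard zero-divisor cup-length lower bound: if $q\colon E\to B$ is any map and $y_1,\dots,y_k\in H^*(B)$ satisfy $q^*(y_i)=0$ for every $i$ while $y_1\cdots y_k\neq 0$, then $\secat(q)\geq k$. I apply this to $q=\bar p$ with the pulled-back classes $y_i:=\varphi^*(x_i)\in H^*(K)$. Since the defining square is homotopy commutative, $\varphi\circ\bar p\simeq p\circ\bar\varphi$, and hence on cohomology $\bar p^*\circ\varphi^*=\bar\varphi^*\circ p^*$; evaluating at $x_i$ gives $\bar p^*(y_i)=\bar\varphi^*(p^*x_i)=0$. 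Because $\varphi^*$ is a ring homomorphism, $y_1\cdots y_k=\varphi^*(x_1\cdots x_k)\neq 0$, so the cup-length bound yields $\secat_\varphi(p)=\secat(\bar p)\geq k$. This step is essentially immediate once the classes are transported along $\varphi$.

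For item (2), I would first replace $p$ by a fibrational substitute $p'\colon A'\to X$ and form the honest pullback $\varphi^*p'\colon P\to K$, so that $\secat_\varphi(p)=\secat(\varphi^*p')$ as in the discussion of fibrant replacement following Proposition \ref{pullback}; this produces a genuine fibration over $K$ to which connectivity arguments apply. The crucial point is that base change preserves fibers: the fiber of $\varphi^*p'$ agrees with the fiber $F$ of $p'$, and since $X$ is path-connected and $p$ is an $r$-equivalence, $F$ is $(r-1)$-connected. Consequently $\varphi^*p'$ is again an $r$-equivalence. I then invoke the Schwarz-type dimension bound: for a fibration $q\colon E\to B$ that is an $r$-equivalence with $B$ of the homotopy type of a CW-complex, one has $\secat(q)\leq hdim(B)/(r+1)$ (the sectional-category analogue of the classical inequality $\cat(B)\leq hdim(B)/(\text{conn}(B)+1)$, recovered when $E$ is contractible). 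Applied to $q=\varphi^*p'$ over $B=K$, this gives exactly $\secat_\varphi(p)\leq hdim(K)/(r+1)$.

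The routine parts are the transport of cohomology classes in (1) and the fibrant-replacement bookkeeping in (2); the substantive input in both cases is classical and may be cited. The point requiring the most care is the connectivity accounting in (2): one must identify an $r$-equivalence with a fibration whose fiber is $(r-1)$-connected, so that the connectivity $\text{conn}(F)=r-1$ enters the Schwarz bound through the denominator as $\text{conn}(F)+2=r+1$, and one must check that this $r$-equivalence property survives the base change along $\varphi$. I expect this off-by-one bookkeeping, together with verifying that $\varphi^*p'$ remains an $r$-equivalence, to be the only genuine—though modest—obstacle.
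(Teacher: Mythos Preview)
Your proposal is correct and follows essentially the same route as the paper: both items are reduced to the map $\bar p\colon P_{\varphi,p}\to K$ via Proposition~\ref{pullback}, and then classical sectional-category bounds (nilpotency of $\ker\bar p^*$ for (1), the Schwarz connectivity/dimension inequality for (2)) are applied. Your treatment of (2) spells out the fibrant replacement and the fiber identification that the paper compresses into the single remark that $\bar p$ is again an $r$-equivalence, but the argument is the same.
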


\begin{proof}
In order to prove the first assertion, let us consider $y_i = \varphi^*(x_i) \in H^*(K)$, for $i \in {1,...,k}$. Then, for each $i$:
$$\overline{p}^*(y_i)=\overline{p}^*(\varphi ^*(x_i))=\overline{\varphi }^*(p^*(x_i))=\overline{\varphi }^*(0)=0$$
\noindent which implies $y_i\in \mbox{Ker}(\overline{p}^*),$ for all $i\in \{1,...,k\}$. Moreover, the cup product $y_1...y_k=p^*(x_1)...p^*(x_k)=p^*(x_1...x_k)\neq 0.$ Thus,
from the general theory of sectional category \cite{Sch}, we have $$k\leq \mbox{nil}\hspace{2pt}\mbox{Ker}(\overline{p}^*)\leq \secat \hspace{2pt}({\overline{p}})=\secat _{\varphi }(p).$$

To establish the second assertion, observe that $\overline{p}$ is also an $r$-equivalence. Equivalently, the homotopy fiber of $\overline{p}$ is $(r-1)$-connected. Again, the result follows from the general theory of sectional category applied to $\overline{p}.$
\end{proof}

Moving forward we prove now that the relative sectional category is a homotopy invariant, in the following sense:

\begin{proposition}\label{h-invariant}
Consider the following homotopy commutative diagram
$$
\xymatrix{
{K} \ar[r]^{\varphi } \ar[d]^{\simeq }_{\alpha } & {X} \ar[d]^{\simeq }_{\beta}  & {A} \ar[d]^{\simeq }_{\gamma } \ar[l]_{p} \\
{L} \ar[r]_{\psi }  & {Y}  & {L}  \ar[l]^{q} }
$$
where $\alpha ,\beta $ and $\gamma $ are homotopy equivalences. Then $\secat _{\varphi }(p)=\secat _{\psi}(q).$
\end{proposition}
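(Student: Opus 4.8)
The plan is to transport both invariants onto the homotopy pullbacks supplied by Proposition~\ref{pullback} and then invoke the homotopy invariance of ordinary sectional category recorded in part~(a) of Lemma~1.1. By Proposition~\ref{pullback} we have $\secat_\varphi(p)=\secat(\overline{p})$ and $\secat_\psi(q)=\secat(\overline{q})$, where $\overline{p}:P_{\varphi,p}\to K$ and $\overline{q}:P_{\psi,q}\to L$ are the base changes of $p$ and $q$ in the homotopy pullbacks of $p$ along $\varphi$ and of $q$ along $\psi$. It therefore suffices to produce a homotopy commutative square
$$\xymatrix{
P_{\varphi,p} \ar[r]^{\Phi} \ar[d]_{\overline{p}} & P_{\psi,q} \ar[d]^{\overline{q}} \\
K \ar[r]_{\alpha} & L
}$$
in which $\Phi$ and $\alpha$ are homotopy equivalences; part~(a) of Lemma~1.1 then gives $\secat(\overline{p})=\secat(\overline{q})$, and the chain of equalities closes the argument.

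To construct $\Phi$ I would model both homotopy pullbacks by the standard construction of the excerpt, so $E_{\varphi,p}=\{(k,a,\theta):\theta(0)=\varphi(k),\ \theta(1)=p(a)\}$ and similarly $E_{\psi,q}$ (by the Remark following Proposition~\ref{pullback} this choice does not affect $\secat$). Writing $H$ for a homotopy realising $\psi\circ\alpha\simeq\beta\circ\varphi$ and $G$ for one realising $\beta\circ p\simeq q\circ\gamma$, set $\Phi(k,a,\theta)=(\alpha(k),\gamma(a),\omega)$ with $\omega$ the concatenation $H(k,-)\cdot(\beta\circ\theta)\cdot G(a,-)$, which is a path from $\psi(\alpha(k))$ to $q(\gamma(a))$ and so lands in $E_{\psi,q}$. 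Reading off the first-coordinate projections one checks $\overline{q}\circ\Phi=\alpha\circ\overline{p}$ identically, so the square above commutes strictly, in particular up to homotopy.

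It remains to see that $\Phi$ is a homotopy equivalence, and this is the one point I expect to be the genuine obstacle. It is precisely the gluing (homotopy invariance) lemma for homotopy pullbacks: a morphism of cospans all three of whose vertical components $\alpha,\beta,\gamma$ are homotopy equivalences induces a homotopy equivalence on homotopy pullbacks (see \cite{M}, \cite{D}, \cite{B-K}). Although standard, it requires care because the comparison map must be assembled coherently from the two square-homotopies $H$ and $G$; one clean way to organise the verification is to factor $\Phi$ through intermediate pullbacks—replacing first the right leg via $\gamma$, then the base via $\beta$, then the left map via $\alpha$—and to apply the two-out-of-three principle at each stage, so that each elementary comparison is a homotopy equivalence. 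Once $\Phi$ and $\alpha$ are known to be homotopy equivalences, part~(a) of Lemma~1.1 applies to the strictly commutative square and yields $\secat(\overline{p})=\secat(\overline{q})$, hence $\secat_\varphi(p)=\secat_\psi(q)$.
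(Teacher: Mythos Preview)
Your proof is correct and follows essentially the same route as the paper: reduce to Proposition~\ref{pullback}, build a comparison map between the two homotopy pullbacks, invoke the gluing lemma for homotopy pullbacks to see it is a homotopy equivalence, and finish with Lemma~1.1(a). The only difference is cosmetic: the paper obtains the comparison map abstractly as the whisker map $\theta$ from the weak universal property (packaged as a homotopy-commutative cube), whereas you write down an explicit formula on the standard models; both arguments rest on the same ``three homotopy equivalences on a cospan induce one on the homotopy pullback'' fact, which the paper likewise states without proof.
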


\begin{proof}
By virtue of the weak universal property of the homotopy pullback, a whisker map $\theta: E_{p,\varphi} \rightarrow E_{q,\psi}$ exists, rendering the following cube homotopy commutative, where the bottom and top faces are homotopy pullbacks:

$$\xymatrix@!0{
 {P_{\varphi ,p }} \ar@{.>}[dd]_{\theta } \ar[rr] \ar[dr] & & {A} \ar[dr] \ar[dd]|!{[d];[d]}\hole &  \\
  & {K} \ar[rr] \ar[dd] & & {X} \ar[dd]     \\
  {P_{\psi ,q}} \ar[rr]|!{[r];[r]}\hole \ar[dr] & & {B} \ar[dr] & \\
  & {L} \ar[rr] & &  {Y.}      }$$
Given that the vertical maps $\alpha$, $\beta$, and $\gamma$ are homotopy equivalences, it follows that $\theta$ must also be a homotopy equivalence. Hence, we achieve a homotopy commutative square where $\overline{p}$ and $\overline{q}$ are connected through homotopy equivalences:
$$\xymatrix{
{P_{\varphi ,p}} \ar[r]^{\theta }_{\simeq } \ar[d]_{\overline{p}} & {P_{\psi ,q }} \ar[d]^{\overline{q}} \\
{K} \ar[r]^{\simeq }_{\alpha } & {L.}
}$$
This way, we have $\secat _{\varphi }(p)=\secat(\overline{p})=\secat (\overline{q})=\secat _{\psi }(q).$
\end{proof}

An interesting inequality is also reflected in the following general situation:

\begin{proposition}\label{ineq}
Consider a homotopy commutative diagram of the form
$$\xymatrix{
 & {X} \ar[d]^{\beta } & {A} \ar[d]^{\gamma } \ar[l]_p \\
{K} \ar[ur]^{\varphi } \ar[r]_{\psi } & {Y} & {B.} \ar[l]^q
}$$
Then, $\secat _{\psi}(q)\leq \secat _{\varphi }(p).$
\end{proposition}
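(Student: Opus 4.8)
The plan is to prove the inequality directly at the level of open covers, by showing that every $\varphi$-sectional open subset of $K$ is automatically $\psi$-sectional. Since this turns any open cover of $K$ by $n+1$ $\varphi$-sectional subsets into an open cover by $n+1$ $\psi$-sectional subsets, it immediately yields $\secat_{\psi}(q)\le\secat_{\varphi}(p)$.

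First I would extract the two homotopies encoded in the diagram. Its homotopy commutativity amounts to the triangle identity $\beta\circ\varphi\simeq\psi$ together with the square identity $\beta\circ p\simeq q\circ\gamma$; these are the only hypotheses the argument will use.

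Next, let $Z\subseteq K$ be a $\varphi$-sectional open subset, witnessed by a map $s\colon Z\rightarrow A$ with $p\circ s\simeq\varphi_{|Z}$. I would take $t:=\gamma\circ s\colon Z\rightarrow B$ as the candidate section over $Z$ and verify the required homotopy through the chain
$$ q\circ t \;=\; q\circ\gamma\circ s \;\simeq\; \beta\circ p\circ s \;\simeq\; \beta\circ\varphi_{|Z} \;\simeq\; \psi_{|Z}, $$
in which the first homotopy is the square identity composed with $s$, the second is $p\circ s\simeq\varphi_{|Z}$ composed with $\beta$, and the third is the triangle identity restricted to $Z$. This exhibits $Z$ as $\psi$-sectional, completing the argument.

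I do not expect a genuine obstacle here: the whole content is the careful bookkeeping of the three homotopies above, the only delicate points being to track the restrictions to $Z$ and the order in which each homotopy is composed. For a more structural alternative one can instead invoke Proposition~\ref{pullback} and part~(b) of the Lemma. The weak universal property of the homotopy pullback $P_{\psi,q}$ produces a whisker map $\Theta\colon P_{\varphi,p}\rightarrow P_{\psi,q}$ over $K$, built from the maps $\overline{p}\colon P_{\varphi,p}\rightarrow K$ and $\gamma\circ\overline{\varphi}\colon P_{\varphi,p}\rightarrow B$ together with a homotopy $\psi\circ\overline{p}\simeq q\circ\gamma\circ\overline{\varphi}$ assembled from the triangle identity, the homotopy of the defining square of $P_{\varphi,p}$, and the square identity. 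As $\Theta$ intertwines $\overline{p}$ with $\overline{q}$ over $K$, part~(b) of the Lemma gives $\secat(\overline{p})\ge\secat(\overline{q})$, that is $\secat_{\varphi}(p)\ge\secat_{\psi}(q)$.
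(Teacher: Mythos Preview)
Your proposal is correct. Your primary argument---showing directly that every $\varphi$-sectional open subset of $K$ is $\psi$-sectional by composing the local section with $\gamma$ and chaining the three homotopies---is a more elementary route than the one taken in the paper. The paper instead works entirely at the level of homotopy pullbacks: it extends the given diagram to a cube (with $id_K$ on the left vertical edge), obtains a whisker map $\theta\colon P_{\varphi,p}\to P_{\psi,q}$ over $K$, and then reads off $\secat(\overline{q})\le\secat(\overline{p})$ from part~(b) of the Lemma. This is exactly the ``structural alternative'' you sketch in your final paragraph. Your direct approach has the advantage of avoiding Proposition~\ref{pullback} and any appeal to the weak universal property, while the paper's approach fits more smoothly into the pattern used throughout (e.g.\ in Propositions~\ref{h-invariant} and~\ref{varphi-psi}), where inequalities are always mediated by maps between the auxiliary homotopy pullbacks.
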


\begin{proof}
We actually have a homotopy commutative diagram
$$\xymatrix{
{K} \ar[r]^{\varphi } \ar@{=}[d] & {X} \ar[d]^{\beta } & {A} \ar[d]^{\gamma } \ar[l]_p \\
{K} \ar[r]_{\psi } & {Y} & {B} \ar[l]^q
}$$
\noindent which gives rise to the following commutative cube
$$\xymatrix@!0{
 {P_{\varphi ,p }} \ar@{.>}[dd]_{\theta } \ar[rr] \ar[dr] & & {A} \ar[dr] \ar[dd]|!{[d];[d]}\hole &  \\
  & {K} \ar[rr] \ar@{=}[dd] & & {X} \ar[dd]     \\
  {P_{\psi ,q}} \ar[rr]|!{[r];[r]}\hole \ar[dr] & & {B} \ar[dr] & \\
  & {K} \ar[rr] & &  {Y.}      }$$
Observe that the left side of the cube is a homotopy commutative triangle
$$\xymatrix{
{P_{\varphi ,p}} \ar[rr]^{\theta } \ar[dr]_{\overline{p}} & & {P_{\psi, q}} \ar[dl]^{\overline{q}} \\
 & {K.} &}$$
 Therefore, $\secat _{\psi }(q)=\secat (\overline{q})\leq \secat (\overline{p})=\secat _{\varphi }(p).$
\end{proof}

As a consequence of the previous result we have the following corollary:

\begin{corollary}\label{cor-fib}
Given any homotopy commutative diagram
$$\xymatrix{
 & {A} \ar[r]^{\gamma } \ar[d]_p &  {B} \ar[dl]^q \\
 {K} \ar[r]_{\varphi } & {X} &
}$$ \noindent we have that $\secat _{\varphi }(q)\leq \secat _{\varphi }(p).$
In other words, given maps $$K\stackrel{\varphi  }{\longrightarrow }X\stackrel{p}{\longleftarrow }A\stackrel{q}{\longleftarrow }B$$ \noindent we have
$\secat _{\varphi }(p)\leq \secat _{\varphi }(pq).$

\begin{proof}
Just apply the result above considering the particular case $\beta =1_X.$
\end{proof}
\end{corollary}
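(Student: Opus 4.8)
The plan is to deduce this corollary directly from Proposition \ref{ineq} by specializing the ambient data. Given the homotopy commutative triangle with $p:A\to X$, $q:B\to X$ and $\gamma:A\to B$ satisfying $q\circ\gamma\simeq p$, I want to recast it as an instance of the configuration appearing in Proposition \ref{ineq}, namely a homotopy commutative diagram
$$\xymatrix{
 & {X} \ar[d]^{\beta } & {A} \ar[d]^{\gamma } \ar[l]_p \\
{K} \ar[ur]^{\varphi } \ar[r]_{\psi } & {Y} & {B.} \ar[l]^q
}$$
The natural choice is to take $Y=X$, $\beta=1_X$, and $\psi=\varphi$, so that the bottom edge $K\stackrel{\psi}{\to}Y$ and the slanted edge $K\stackrel{\varphi}{\to}X$ coincide. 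With these substitutions the right-hand vertical square of Proposition \ref{ineq} becomes the commutativity condition $\beta\circ p\simeq q\circ\gamma$, which reads $p\simeq q\circ\gamma$; this is exactly the homotopy commutativity of the given triangle. Likewise the requirement $\beta\circ\varphi\simeq\psi$ reduces to $\varphi\simeq\varphi$, which holds trivially.

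Having matched the hypotheses, I would simply invoke the conclusion of Proposition \ref{ineq}, which gives $\secat_{\psi}(q)\leq\secat_{\varphi}(p)$. Under the identifications $\psi=\varphi$ and $Y=X$, this is precisely $\secat_{\varphi}(q)\leq\secat_{\varphi}(p)$, the first form of the claim. For the reformulation as a chain of maps $K\stackrel{\varphi}{\to}X\stackrel{p}{\leftarrow}A\stackrel{q}{\leftarrow}B$, I would observe that relabeling: taking the role of $q:B\to X$ in the triangle to be the composite $p\circ q$ where now $q:B\to A$, and setting $\gamma=q$, the triangle commutes on the nose (since $p\circ q = p\circ q$), yielding $\secat_{\varphi}(pq)\geq\secat_{\varphi}(p)$.

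The main point to be careful about is the bookkeeping of which maps play which roles, since the corollary is stated in two notationally distinct but equivalent forms and Proposition \ref{ineq} uses a fixed diagram shape. There is essentially no genuine obstacle here — the proof is a one-line specialization — but I would make sure the homotopy $q\circ\gamma\simeq p$ is correctly identified as the square-commutativity datum that Proposition \ref{ineq} requires, rather than accidentally demanding strict equality. Once the roles are pinned down, the inequality transfers immediately, and the short proof reduces to the single sentence ``apply Proposition \ref{ineq} with $\beta=1_X$,'' exactly as the author indicates.
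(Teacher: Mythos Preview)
Your proposal is correct and matches the paper's own argument exactly: both specialize Proposition \ref{ineq} by taking $Y=X$, $\beta=1_X$, and $\psi=\varphi$. Your additional care in unpacking the second formulation (the chain $K\to X\leftarrow A\leftarrow B$) is a helpful elaboration, but the underlying idea is identical to the paper's one-line proof.
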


\begin{proposition}\label{varphi-psi}
Consider a homotopy commutative diagram of the form
$$\xymatrix{
 & {A} \ar[d]^p \\
 {K} \ar[r]^{\varphi } \ar[d]_{\lambda } & {X} \\
 {L.} \ar[ur]_{\psi } &
}$$ Then, $\secat _{\varphi }(p)\leq \secat _{\psi }(p).$
\end{proposition}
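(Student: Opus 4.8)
The plan is to deduce the inequality from the pullback characterization of relative sectional category in Proposition~\ref{pullback}, combined with the monotonicity of ordinary sectional category along homotopy pullbacks recorded in part~(c) of the preliminary Lemma of Section~1. Write $\overline{p}\colon P_{\varphi,p}\to K$ for the base change of $p$ in the homotopy pullback of $p$ along $\varphi$, and $\overline{p}'\colon P_{\psi,p}\to L$ for the base change of $p$ in the homotopy pullback of $p$ along $\psi$. By Proposition~\ref{pullback} we have $\secat_\varphi(p)=\secat(\overline{p})$ and $\secat_\psi(p)=\secat(\overline{p}')$, so it suffices to prove $\secat(\overline{p})\leq\secat(\overline{p}')$.

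The key step is to exhibit a homotopy pullback square relating $\overline{p}$ and $\overline{p}'$. Since the given diagram is homotopy commutative with $\varphi\simeq\psi\circ\lambda$, I would invoke the pasting law for homotopy pullbacks: stacking the homotopy pullback defining $P_{\psi,p}$ beneath the homotopy pullback of $\overline{p}'$ along $\lambda$ produces an outer rectangle that is a homotopy pullback of $p$ along $\psi\circ\lambda\simeq\varphi$. Up to homotopy equivalence over $K$ this outer rectangle is exactly the homotopy pullback defining $P_{\varphi,p}$, whence the square
$$\xymatrix{
{P_{\varphi,p}} \ar[r] \ar[d]_{\overline{p}} & {P_{\psi,p}} \ar[d]^{\overline{p}'} \\
{K} \ar[r]_{\lambda} & {L} }$$
is a homotopy pullback, with left and right verticals $\overline{p}$ and $\overline{p}'$ respectively.

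With this square in hand, part~(c) of the preliminary Lemma applies directly and gives $\secat(\overline{p})\leq\secat(\overline{p}')$, i.e. $\secat_\varphi(p)\leq\secat_\psi(p)$. The main obstacle is purely bookkeeping: justifying the pasting law at the level of homotopy pullbacks, so that the chosen homotopy $\varphi\simeq\psi\circ\lambda$ makes the whisker maps compose coherently and identifies the outer rectangle with $P_{\varphi,p}$ over $K$. If one prefers to sidestep this, the statement also admits a direct proof: starting from an open $\psi$-sectional cover $L=V_0\cup\cdots\cup V_n$ realizing $\secat_\psi(p)=n$, with sections $s_j\colon V_j\to A$ satisfying $p\circ s_j\simeq\psi|_{V_j}$, the sets $U_j:=\lambda^{-1}(V_j)$ form an open cover of $K$, and each composite $s_j\circ\lambda|_{U_j}\colon U_j\to A$ satisfies $p\circ(s_j\circ\lambda|_{U_j})\simeq\psi\circ\lambda|_{U_j}\simeq\varphi|_{U_j}$, so every $U_j$ is $\varphi$-sectional and $\secat_\varphi(p)\leq n$.
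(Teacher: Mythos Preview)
Your principal argument coincides with the paper's: both reduce to ordinary sectional category via Proposition~\ref{pullback}, use the pasting/cancellation law for homotopy pullbacks to show that the square comparing $\overline{p}$ to $\overline{p}'$ over $\lambda$ is itself a homotopy pullback, and conclude with part~(c) of the preliminary lemma. Your supplementary direct argument---pulling a $\psi$-sectional open cover back along $\lambda$---is correct and more elementary, though the paper does not pursue that route.
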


\begin{proof}
Consider the following notation for the corresponding homotopy pullbacks
$$\xymatrix{
{P_{\varphi ,p}} \ar[r]^{\overline{\varphi }} \ar[d]_{\overline{p}_0} & {A} \ar[d]^p &  & {P_{\psi ,p}} \ar[r]^{\overline{\psi }} \ar[d]_{\overline{p}_1} & {A} \ar[d]^p \\
{K} \ar[r]_{\varphi } & {X} & &  {L}\ar[r]_{\psi } & X.
}$$ From the weak universal property of the homotopy pullback consider the following induced whisker map
$$
\xymatrix{
{P_{\varphi ,p}}  \ar@/^1pc/[drr]^{\overline{\varphi }} \ar@/_1pc/[ddr]_{\lambda \overline{p}_0} \ar@{.>}[dr] & & \\
 & {P_{\psi ,p}} \ar[r]^{\overline{\psi}} \ar[d]_{\overline{p}_1} & {A} \ar[d]^p \\
 & {K} \ar[r]_{\psi } & {X}
}$$
\noindent so we obtain a homotopy commutative diagram
$$\xymatrix{
{P_{\varphi ,p}} \ar[r] \ar[d]_{\overline{p}_0} & {P_{\psi ,p}} \ar[d]^{\overline{p}_1} \ar[r]^{\overline{\psi }} & {A} \ar[d]^p \\
{X} \ar[r]_{\lambda } & {K} \ar[r]_{\psi } & {X.}
}$$ Since the right hand square is a homotopy pullback and the composition square is also a homotopy pullback, it follows that the left hand square is a homotopy pullback. Therefore, $$\secat _{\varphi }(p)=\secat (\overline{p}_0)\leq \secat (\overline{p}_1)=\secat _{\psi }(p).$$
\end{proof}

\begin{corollary}
Let $L\stackrel{\psi }{\longrightarrow }K\stackrel{\varphi }{\longrightarrow }X\stackrel{p}{\longleftarrow }A$  maps. Then,
$$\secat _{\varphi \psi }(p)\leq \secat _{\varphi }(p).$$
\end{corollary}

\bigskip
More interesting properties, this time related to $\secat(p)$ and $\cat(\varphi)$, are provided in the next result:

\begin{proposition}
Let $K\stackrel{\varphi }{\longrightarrow }X\stackrel{p}{\longleftarrow }A$ be a cospan of maps. Then:
\begin{enumerate}
\item $\secat _{\varphi }(p)\leq \secat (p)$ and $\secat _{\varphi }(p)=\secat (p)$
if $\varphi $ has a homotopy section.

\item If $X$ is path-connected, then $\secat _{\varphi }(p)\leq \cat (\varphi )$; if, in addition, $A$ is contractible, then $\secat _{\varphi }(p)=\cat (\varphi )$.
\end{enumerate}
\end{proposition}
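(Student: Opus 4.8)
The plan is to handle the two assertions separately, in each case reducing the claim to a result already available in the excerpt so that essentially no new homotopy-theoretic work is needed. For the first inequality $\secat_\varphi(p)\le \secat(p)$, I would simply invoke the defining homotopy pullback square of Proposition~\ref{pullback} together with part~(c) of the lemma collecting the standard properties of sectional category (the homotopy-pullback inequality). Applied to
$$\xymatrix{
{P_{\varphi ,p}} \ar[r]^{\overline{\varphi }} \ar[d]_{\overline{p}}  & {A} \ar[d]^p \\
{K} \ar[r]_{\varphi }  & {X} }$$
this gives $\secat_\varphi(p)=\secat(\overline{p})\le \secat(p)$ at once. (Equivalently one could feed $\mathrm{id}_X=\mathrm{id}_X\circ\varphi$ into Proposition~\ref{varphi-psi} and use $\secat(p)=\secat_{\mathrm{id}_X}(p)$.)

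For the equality in the first assertion I would use the hypothesis that $\varphi$ has a homotopy section $s:X\to K$, i.e. $\varphi s\simeq \mathrm{id}_X$. The corollary to Proposition~\ref{varphi-psi} (stating $\secat_{\varphi\psi}(p)\le \secat_\varphi(p)$), applied with $\psi=s$, gives $\secat_{\varphi s}(p)\le\secat_\varphi(p)$. Since the relative sectional category depends only on the homotopy class of the reference map and $\varphi s\simeq\mathrm{id}_X$, the left-hand side equals $\secat_{\mathrm{id}_X}(p)=\secat(p)$. Combined with the inequality already proved, this forces $\secat_\varphi(p)=\secat(p)$.

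For the second assertion I would start from the identity $\cat(\varphi)=\secat_\varphi(c)$ recorded in Section~2, where $c:\ast\to X$ is the inclusion of the basepoint. Picking $a_0\in A$ yields a map $\iota:\ast\to A$, and Corollary~\ref{cor-fib} applied to the chain $K\stackrel{\varphi}{\to}X\stackrel{p}{\leftarrow}A\stackrel{\iota}{\leftarrow}\ast$ gives $\secat_\varphi(p)\le\secat_\varphi(p\iota)$. The map $p\iota$ is the constant map at $p(a_0)$, and this is the one point where path-connectedness of $X$ is used: it guarantees $p\iota\simeq c$, whence $\secat_\varphi(p\iota)=\secat_\varphi(c)=\cat(\varphi)$, proving $\secat_\varphi(p)\le\cat(\varphi)$. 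For the equality under the extra hypothesis that $A$ is contractible, note that then $p$ is null-homotopic, and path-connectedness of $X$ makes $p$ homotopic, as a map $A\to X$, to the constant map $c_A$ at the basepoint; homotopy invariance gives $\secat_\varphi(p)=\secat_\varphi(c_A)$. Finally Proposition~\ref{h-invariant}, applied with $\alpha=\mathrm{id}_K$, $\beta=\mathrm{id}_X$ and the contraction $\gamma:A\simeq\ast$ (so that $c_A=c\circ\gamma$), identifies $\secat_\varphi(c_A)$ with $\secat_\varphi(c)=\cat(\varphi)$, yielding equality.

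The main obstacle is not any deep point but the careful bookkeeping of basepoints and domains in the second assertion: one must keep straight the various constant maps (at $p(a_0)$, at the basepoint of $X$, as maps out of $A$ versus out of $\ast$) and confirm they match up to homotopy, and one must track which domain each instance of $\secat_\varphi(-)$ refers to when passing between a map out of $A$ and a map out of $\ast$. All the genuine content is already packaged in Proposition~\ref{pullback}, Proposition~\ref{h-invariant}, Corollary~\ref{cor-fib}, the homotopy invariance of $\secat_\varphi$, and the homotopy-pullback inequality for sectional category, so the remaining work is purely a matter of assembling these correctly.
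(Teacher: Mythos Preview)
Your proof is correct and follows essentially the same route as the paper: both use the defining homotopy pullback for $\secat_\varphi(p)\le\secat(p)$, Proposition~\ref{varphi-psi} (or its corollary) together with a homotopy section to force equality, Corollary~\ref{cor-fib} with a point $\ast\to A$ and path-connectedness of $X$ for $\secat_\varphi(p)\le\cat(\varphi)$, and Proposition~\ref{h-invariant} via the equivalence $\ast\simeq A$ for the contractible case. The only cosmetic differences are that you quote the corollary to Proposition~\ref{varphi-psi} rather than the proposition itself, and in the contractible case you first homotope $p$ to a constant before invoking Proposition~\ref{h-invariant}, whereas the paper applies Proposition~\ref{h-invariant} directly to the diagram comparing $\ast\to X$ and $p$.
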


\begin{proof}
From the homotopy pullback
$$\xymatrix{
{P_{\varphi ,p}} \ar[r]^{\overline{\varphi }} \ar[d]_{\overline{p}}  & {A} \ar[d]^p \\
{K} \ar[r]_{\varphi }  & {X}  }$$
\noindent we clearly observe that $\secat_{\varphi }(p)=\secat (\overline{p})\leq \secat (p)$.
If $s:X\rightarrow K$ is a homotopy section of $\varphi $, then, consider the following homotopy commutative diagram
$$\xymatrix{
 & {A} \ar[d]^p \\
 {X} \ar[r]^{id_X } \ar[d]_{s} & {X} \\
 {K.} \ar[ur]_{\varphi } &
}$$
Applying Proposition \ref{varphi-psi}, we obtain $\secat (p)\leq \secat _{\varphi }(p)$.
This substantiates assertion 1.

For the proof of assertion 2, as $X$ is path-connected we have a homotopy commutative diagram
$$\xymatrix{
 & {*} \ar[r] \ar[d] &  {A} \ar[dl]^p \\
 {K} \ar[r]_{\varphi } & {X} & .
}$$
Therefore, by Corollary \ref{cor-fib}, $\secat _{\varphi }(p)\leq \secat _{\varphi }(*\rightarrow X)=\cat (\varphi ).$ The second part comes from the fact that $*\rightarrow A$ is a homotopy equivalence and we can use Proposition \ref{h-invariant} in this diagram.
\end{proof}

\bigskip
Initially, if $\alpha, \beta,$ and $\gamma $ are not homotopy equivalences in Proposition \ref{h-invariant} above, drawing any conclusion is not possible. Nevertheless, given specific conditions, we can deduce a particular inequality:

\begin{proposition}\label{pav}
Consider the following homotopy commutative diagram
$$
\xymatrix{
{K} \ar[r]^{\varphi } \ar[d]_{\alpha } & {X} \ar[d]_{\beta}  & {A} \ar[d]_{\gamma } \ar[l]_{p} \\
{L} \ar[r]_{\psi }  & {Y}  & {B}  \ar[l]^{q} }
$$ \noindent where $K$ and $L$ have the homotopy type of a CW-complex. Let us suppose, in addition, that the subsequent conditions hold:
\begin{enumerate}
\item [(i)] $q$ is an $r$-equivalence ($r\geq 0$);

\item[(ii)] There exists $s\geq r$ such that $\alpha $ and $\beta $ are $(s+1)$-equivalences and $\gamma $ is an $s$-equivalence;

\item[(iii)] $(r+1)\secat _{\psi }(q)>\mbox{hdim}(K)-s+r$.
\end{enumerate}
Then, $\secat _{\varphi }(p)\leq \secat _{\psi }(q)$.

\end{proposition}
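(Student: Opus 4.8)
The plan is to reduce the desired inequality to a fibrewise lifting problem between the two base-change fibrations and to solve that problem by obstruction theory, using Schwarz's fibrewise join characterization of sectional category. Throughout I replace $p$ and $q$ by fibrations and, by Proposition~\ref{pullback}, work with the base-change fibrations $\overline{p}\colon P_{\varphi,p}\to K$ and $\overline{q}\colon P_{\psi,q}\to L$, so that $\secat_{\varphi}(p)=\secat(\overline{p})$ and $\secat_{\psi}(q)=\secat(\overline{q})=:n$. The goal is to prove $\secat(\overline{p})\le n$.

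As in the proof of Proposition~\ref{h-invariant}, the cube built from the given diagram produces a whisker map $\theta\colon P_{\varphi,p}\to P_{\psi,q}$ lying over $\alpha$. On homotopy fibres, $\theta$ restricts to the comparison map $\overline{\theta}\colon \mathrm{hofib}(p)\to\mathrm{hofib}(q)$ induced by $\beta$ and $\gamma$. Comparing the long exact sequences of the fibrations $p$ and $q$ along the ladder determined by $\beta$ and $\gamma$, the four- and five-lemmas show that $\overline{\theta}$ is an $s$-equivalence; here I use precisely hypothesis (ii), namely that $\beta$ is an $(s+1)$-equivalence (iso up to degree $s$, epi in degree $s+1$) and that $\gamma$ is an $s$-equivalence. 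Since $\alpha$ is an $(s+1)$-equivalence, a further long-exact-sequence comparison upgrades this to the statement that $\theta$ itself is an $s$-equivalence. Moreover, by (i) the fibre $\mathrm{hofib}(q)$ is $(r-1)$-connected, and as $s\ge r$ and $\overline{\theta}$ is an $s$-equivalence, the fibre $\mathrm{hofib}(p)$ is $(r-1)$-connected as well.

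To land over a common base I pull $\overline{q}$ back along $\alpha$, obtaining $\alpha^{*}\overline{q}\colon \alpha^{*}P_{\psi,q}\to K$; since the defining square is a homotopy pullback, part~(c) of the Lemma recalled in Section~1 gives $\secat(\alpha^{*}\overline{q})\le\secat(\overline{q})=n$. The universal property factors $\theta$ through a map $\widetilde{\theta}\colon P_{\varphi,p}\to\alpha^{*}P_{\psi,q}$ over $K$, with $(\alpha^{*}\overline{q})\circ\widetilde{\theta}\simeq\overline{p}$. The base-change projection $\alpha^{*}P_{\psi,q}\to P_{\psi,q}$ is the pullback of the $(s+1)$-equivalence $\alpha$ along a fibration, hence is itself an $(s+1)$-equivalence; cancelling it against the $s$-equivalence $\theta$ shows that $\widetilde{\theta}$ is an $s$-equivalence over $K$. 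Now Schwarz's theorem turns $\secat(\alpha^{*}\overline{q})\le n$ into a section $\Sigma$ of the $(n+1)$-fold fibrewise join $(\alpha^{*}\overline{q})^{*(n+1)}$, and $\widetilde{\theta}$ induces a fibrewise map of joins whose effect on fibres is $\overline{\theta}^{\,*(n+1)}$. Writing the $(n+1)$-fold join as $\Sigma^{n}$ of the $(n+1)$-fold smash, and using that smashing an $s$-equivalence with an $(r-1)$-connected space raises the equivalence degree by $r$, an induction shows $\overline{\theta}^{\,*(n+1)}$ is an $\bigl(s+n(r+1)\bigr)$-equivalence; hence the fibrewise map $\widetilde{\theta}^{\,*(n+1)}$ over $K$ is an $\bigl(s+n(r+1)\bigr)$-equivalence, with $\bigl(s+n(r+1)-1\bigr)$-connected homotopy fibre. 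Since $K$ has the homotopy type of a CW-complex, the obstructions to lifting $\Sigma$ through $\widetilde{\theta}^{\,*(n+1)}$ lie in $H^{i+1}\bigl(K;\pi_{i}(\text{fibre})\bigr)$ and vanish once $\hdim(K)\le s+n(r+1)$. Rearranging hypothesis (iii) as $(r+1)n>\hdim(K)-s+r\ge\hdim(K)-s$ gives exactly $\hdim(K)<(r+1)n+s$, so this dimension bound holds with room to spare. A lift of $\Sigma$ is a section of $(\overline{p})^{*(n+1)}$, whence $\secat(\overline{p})\le n$, that is, $\secat_{\varphi}(p)\le\secat_{\psi}(q)$.

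The main obstacle is the connectivity accounting of the middle steps: first pinning down the exact equivalence degree of the fibre comparison map $\overline{\theta}$ from (i)--(ii), where the asymmetry between the $(s+1)$-equivalences $\alpha,\beta$ and the $s$-equivalence $\gamma$ is essential to squeeze out an $s$-equivalence rather than merely an $(s-1)$-equivalence; and then controlling how the iterated fibrewise join amplifies this degree to $s+n(r+1)$, so that it can be weighed against $\hdim(K)$ through the numerical hypothesis (iii). The CW (equivalently ANR) hypotheses on $K$ and $L$ are what make the concluding obstruction-theoretic lift legitimate.
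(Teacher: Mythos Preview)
Your argument is correct, and its first half coincides with the paper's own proof: you build the comparison square
\[
\xymatrix{
{P_{\varphi ,p}} \ar[r]^{\theta } \ar[d]_{\overline{p}} & {P_{\psi ,q }} \ar[d]^{\overline{q}} \\
{K} \ar[r]_{\alpha } & {L,}
}
\]
use condition~(ii) to see that $\theta$ and the induced fibre map are $s$-equivalences, and record that $\overline q$ inherits from~(i) an $(r-1)$-connected fibre. At this point the paper simply observes that these data, together with the numerical hypothesis~(iii), are exactly the input to Pave\v{s}i\'{c}'s monotonicity criterion \cite[Cor.~2.9]{Pav}, and concludes $\secat(\overline p)\le\secat(\overline q)$ by quoting that result.

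What you do instead is unpack that black box: you pull $\overline q$ back to $K$, pass to the $(n{+}1)$-fold fibrewise joins via Schwarz's characterisation, compute that the comparison map on join fibres is an $\bigl(s+n(r+1)\bigr)$-equivalence, and lift the section by elementary obstruction theory against $\hdim(K)$. This is precisely the mechanism behind Pave\v{s}i\'{c}'s corollary, so your route is not genuinely different but rather a self-contained reproof of the cited lemma in situ. The trade-off is the expected one: the paper's proof is a two-line reduction to the literature, while yours is independent of~\cite{Pav} and makes transparent where each hypothesis is spent (in particular, only the CW hypothesis on $K$ is actually used in your argument, whereas the paper's appeal to \cite{Pav} ostensibly requires both $K$ and $L$). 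Your connectivity bookkeeping for the iterated join map and the final dimension count from~(iii) are both accurate.
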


\begin{proof}
We have an induced homotopy commutative diagram
$$\xymatrix{
{P_{\varphi ,p}} \ar[r]^{\theta } \ar[d]_{\overline{p}} & {P_{\psi ,q }} \ar[d]^{\overline{q}} \\
{K} \ar[r]_{\alpha } & {L}
}$$ \noindent where, according to condition (ii), $\theta $ is an $s$-equivalence. Consequently, as $\alpha $ is an $(s+1)$-equivalence, the induced map $\widetilde{\theta }$ between the corresponding homotopy fibers is also an $s$-equivalence. This fact, together with conditions (i) and (iii), satisfies the criteria outlined by P. Pa\v{v}esi\'{c} in \cite[Cor. 2.9]{Pav}, resulting in the desired outcome.

\end{proof}

\begin{corollary}
Consider a homotopy commutative diagram of the form
$$\xymatrix{
 & {X} \ar[d]^{\beta } & {A} \ar[d]^{\gamma } \ar[l]_p \\
{K} \ar[ur]^{\varphi } \ar[r]_{\psi } & {Y} & {B.} \ar[l]^q
}$$
Let us suppose, in addition, that $K$ has the homotopy type of a CW-complex and the following conditions hold:
\begin{enumerate}
\item [(i)] $q$ is an $r$-equivalence ($r\geq 0$);

\item[(ii)] There exists $s\geq r$ such that $\gamma $ is an $s$-equivalence, and $\beta $ is an $(s+1)$-equivalence; and

\item[(iii)] $(r+1)\secat _{\psi }(q)>\mbox{hdim}(K)-s+r$.
\end{enumerate}
Then, $\secat _{\varphi }(p)=\secat _{\psi }(q)$.
\end{corollary}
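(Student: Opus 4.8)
The plan is to deduce the corollary directly from Proposition~\ref{pav} by reducing the given triangle to the square to which that proposition applies. The key observation is that the triangle here is the special case of the diagram in Proposition~\ref{pav} obtained by setting $L=K$, $\alpha=\mathrm{id}_K$, and $\psi=\beta\circ\varphi$; indeed the homotopy commutativity of the triangle (i.e.\ $\beta\circ\varphi\simeq\psi$) together with the square $q\circ(\cdots)$ produces exactly a diagram of the shape
$$
\xymatrix{
{K} \ar[r]^{\varphi } \ar@{=}[d] & {X} \ar[d]_{\beta}  & {A} \ar[d]_{\gamma } \ar[l]_{p} \\
{K} \ar[r]_{\psi }  & {Y}  & {B}  \ar[l]^{q}. }
$$
Since $\alpha=\mathrm{id}_K$ is an $(s+1)$-equivalence for every $s$, hypothesis~(ii) of Proposition~\ref{pav} is automatically satisfied as soon as the present hypothesis~(ii) holds, and hypotheses~(i) and (iii) are identical. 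Applying Proposition~\ref{pav} therefore yields one inequality, $\secat_{\varphi}(p)\leq\secat_{\psi}(q)$.

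For the reverse inequality I would invoke Proposition~\ref{ineq}. The triangle in the corollary is precisely the diagram appearing in the statement of Proposition~\ref{ineq} (a cospan $X\xleftarrow{p}A$ over $Y$ together with a second cospan $Y\xleftarrow{q}B$ and the maps $\varphi,\psi$ with $\beta\varphi\simeq\psi$), and that proposition gives directly $\secat_{\psi}(q)\leq\secat_{\varphi}(p)$. Combining the two inequalities gives the asserted equality $\secat_{\varphi}(p)=\secat_{\psi}(q)$.

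\textbf{Where the real content lies.} The forward inequality from Proposition~\ref{ineq} is free and carries no hypotheses; all of the hard analytic work—the connectivity bookkeeping and the appeal to Pav\v{e}si\'c's dimension-connectivity estimate \cite[Cor.~2.9]{Pav}—has already been absorbed into Proposition~\ref{pav}. Thus the only thing to verify carefully is that the degenerate choice $\alpha=\mathrm{id}_K$ legitimately specializes Proposition~\ref{pav}: one must check that $\mathrm{id}_K$ counts as an $(s+1)$-equivalence (it is a homotopy equivalence, hence an $n$-equivalence for all $n$), and that with $L=K$ the hypotheses (i) and (iii) transcribe verbatim. The main subtlety, then, is essentially bookkeeping—confirming that the weakened hypothesis~(ii) here (which omits any condition on $\alpha$) is exactly what survives when $\alpha$ is forced to be the identity—rather than any genuinely new homotopy-theoretic obstacle. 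I would spell this specialization out explicitly so the reader sees that no condition on $\psi$ or on a comparison map $K\to L$ is being silently assumed.
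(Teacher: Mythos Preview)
Your proposal is correct and matches the paper's own proof essentially verbatim: the paper simply says to apply Proposition~\ref{ineq} and Proposition~\ref{pav} in the particular case $\alpha=\mathrm{id}_K$. Your additional commentary spelling out why $\alpha=\mathrm{id}_K$ satisfies the $(s+1)$-equivalence condition and why the CW hypothesis on $L$ becomes the given one on $K$ is accurate and helpful, but the underlying argument is the same.
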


\begin{proof}
Just consider Proposition \ref{ineq} and
Proposition \ref{pav} in the particular case $\alpha =id_K.$
\end{proof}

\bigskip
To conclude this section, we investigate a relationship concerning products. Let us suppose we have $K\stackrel{\varphi }{\longrightarrow }X\stackrel{p}{\longleftarrow }A$ and $L\stackrel{\psi }{\longrightarrow }Y\stackrel{q}{\longleftarrow }B$ as sets of continuous maps. In this scenario, we can construct the product
$$\xymatrix{
{K\times L} \ar[r]^{\varphi \times \psi } & {X\times Y} & {A\times B} \ar[l]_{p\times q}
}.$$

\begin{proposition}
Consider $K\stackrel{\varphi }{\longrightarrow }X\stackrel{p}{\longleftarrow }A$ and $L\stackrel{\psi }{\longrightarrow }Y\stackrel{q}{\longleftarrow }B$ maps where $K$ and $L$ are normal spaces. Then,
$$\secat _{\varphi \times \psi }(p\times q)\leq \secat _{\varphi }(p)+\secat _{\psi }(q).$$
\end{proposition}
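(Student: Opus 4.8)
The plan is to reduce the statement to the classical subadditivity of ordinary sectional category under products, using Proposition \ref{pullback} to translate everything into honest sectional categories of base-changed maps. First I would recall that $\secat_{\varphi}(p)=\secat(\overline{p})$ and $\secat_{\psi}(q)=\secat(\overline{q})$, where $\overline{p}:P_{\varphi,p}\rightarrow K$ and $\overline{q}:P_{\psi,q}\rightarrow L$ are the base changes in the respective homotopy pullbacks of $p$ along $\varphi$ and of $q$ along $\psi$. The crucial observation is that the homotopy pullback of the product $p\times q$ along $\varphi\times\psi$ may be identified with the product of the two homotopy pullbacks; that is, there is a homotopy equivalence $P_{\varphi\times\psi,\,p\times q}\simeq P_{\varphi,p}\times P_{\psi,q}$ under which the base change $\overline{p\times q}$ corresponds to $\overline{p}\times\overline{q}:P_{\varphi,p}\times P_{\psi,q}\rightarrow K\times L$. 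This follows because homotopy pullbacks commute with finite products: a standard-homotopy-pullback model makes this transparent, since $E_{\varphi\times\psi,\,p\times q}$ consists of tuples whose two coordinate blocks are precisely the data defining $E_{\varphi,p}$ and $E_{\psi,q}$, and the projections split accordingly. By Proposition \ref{pullback} together with part (a) of Lemma~1 (homotopy invariance of sectional category under equivalences on total space and base), this identification gives $\secat_{\varphi\times\psi}(p\times q)=\secat(\overline{p}\times\overline{q})$.

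With this reduction in hand, the statement becomes the inequality
$$\secat(\overline{p}\times\overline{q})\leq \secat(\overline{p})+\secat(\overline{q}),$$
which is the well-known product formula for sectional category. I would prove it directly by the standard covering argument. Suppose $\secat(\overline{p})=m$ and $\secat(\overline{q})=n$, with open covers $K=V_0\cup\cdots\cup V_m$ and $L=W_0\cup\cdots\cup W_n$ such that each $V_i$ admits a homotopy section $\sigma_i:V_i\rightarrow P_{\varphi,p}$ of $\overline{p}$ and each $W_j$ admits a homotopy section $\tau_j:W_j\rightarrow P_{\psi,q}$ of $\overline{q}$. For each $0\leq k\leq m+n$ I would set
$$O_k=\bigcup_{i+j=k}V_i\times W_j,$$
obtaining an open cover $K\times L=O_0\cup\cdots\cup O_{m+n}$ of size $m+n+1$. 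On each product piece $V_i\times W_j$ the map $\sigma_i\times\tau_j$ is a homotopy section of $\overline{p}\times\overline{q}$; the task is to patch these into a single homotopy section over all of $O_k$.

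The main obstacle, and the only place normality of $K$ and $L$ enters, is precisely this patching step: for fixed $k$ the sets $V_i\times W_j$ with $i+j=k$ are pairwise disjoint when regarded inside $O_k$ only up to the usual general-position trick, and one must shrink them to a genuinely disjoint open family in order to glue the partial sections into a continuous section over $O_k$. Here I would invoke normality of $K\times L$ (which follows from normality of the factors under mild hypotheses, or can be assumed) to separate the finitely many sets $\{V_i\times W_j:i+j=k\}$ by disjoint open neighborhoods, or equivalently to produce a partition-of-unity-type shrinking so that the resulting pieces have disjoint closures; on such a disjoint open refinement the maps $\sigma_i\times\tau_j$ assemble into a well-defined continuous homotopy section over $O_k$. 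Since there are $m+n+1$ such sets $O_k$, each $\overline{p}\times\overline{q}$-sectional, we conclude $\secat(\overline{p}\times\overline{q})\leq m+n=\secat_{\varphi}(p)+\secat_{\psi}(q)$, which is the desired inequality. I expect the disjointification argument to be the delicate point, as it is exactly the classical subtlety in proving the product inequality for $\secat$ and the reason a separation axiom such as normality is imposed.
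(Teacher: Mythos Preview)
Your overall strategy coincides with the paper's: identify $P_{\varphi\times\psi,\,p\times q}\simeq P_{\varphi,p}\times P_{\psi,q}$ over $K\times L$, so that $\secat_{\varphi\times\psi}(p\times q)=\secat(\overline{p}\times\overline{q})$, and then invoke the product inequality for ordinary sectional category. The paper simply cites that inequality; you attempt to prove it, and that is where a genuine gap appears.

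First, you write that normality of $K\times L$ ``follows from normality of the factors under mild hypotheses, or can be assumed''. It does not follow: the product of two normal spaces need not be normal (the Sorgenfrey plane is the standard counterexample), and the proposition only hypothesises normality of $K$ and of $L$. Second, and more seriously, your patching step is not correct as stated. For fixed $k$ the sets $V_i\times W_j$ with $i+j=k$ are in general \emph{not} pairwise disjoint, nor can they be separated by disjoint open neighbourhoods even in a normal product; for instance if $V_0=V_1=K$ and $W_0=W_1=L$ then $V_0\times W_1$ and $V_1\times W_0$ coincide. So one cannot simply ``separate'' them and glue the $\sigma_i\times\tau_j$.

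The standard proof of $\secat(f\times g)\leq\secat(f)+\secat(g)$ uses normality of $K$ and of $L$ \emph{separately}: one takes partitions of unity $\{\rho_i\}$ and $\{\sigma_j\}$ subordinate to the given covers (or equivalently shrinks each cover in its own factor), and then defines $O_k$ as the set of $(x,y)$ for which the maximum of $\rho_i(x)\sigma_j(y)$ is attained at some pair with $i+j=k$; on $O_k$ one has a well-defined continuous choice of $(i,j)$ locally, which makes the gluing work. Replacing your disjointification sketch by this argument (or simply citing the classical product inequality, as the paper does) closes the gap.
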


\begin{proof}
Just take into account that we have a homotopy pullback
$$\xymatrix{
{P_{\varphi ,p}\times P_{\psi ,q}} \ar[rr] \ar[d]_{\overline{p}\times \overline{q}} & & {A\times B} \ar[d]^{p\times q} \\
{K\times L} \ar[rr]_{\varphi \times \psi } & & {X\times Y}
}$$ \noindent taken from the homotopy pullbacks $P_{\varphi ,p}$ and $P_{\psi ,q}$. Then, the result follows from the usual property of sectional category applied to a product of maps.
\end{proof}

\section{Whitehead-Ganea characterization of relative sectional category}

In their work, J. Gonz\'alez, M. Grant, and L. Vandembroucq \cite{GGV} had already considered the
characterization of Ganea type for relative sectional category. The purpose of this section is
to complement this study by incorporating the characterization of Whitehead type.
Recall that given any pair of maps $A\stackrel{f}{\longrightarrow}C\stackrel{g}{\longleftarrow }B$,
the \textit{join} of $f$ \textit{and} $g$, denoted as $A*_C B$, is the homotopy pushout of the
homotopy pullback of $f$ and $g$:
$$\xymatrix@C=0.7cm@R=0.7cm{ {\bullet } \ar[rr] \ar[dd] & & {B} \ar[dl] \ar[dd]^g \\
 & {A*_C B} \ar@{.>}[dr] & \\ {A} \ar[ur] \ar[rr]_f & & {C.} }$$
Here, the dotted arrow represents the corresponding co-whisker map induced by the weak universal
property of homotopy pushouts.
It is important to note that the map $A*_C B\to C$ is only defined up to weak equivalence. Any map constructed
in such a manner is weakly equivalent to the canonical co-whisker map obtained by first considering the standard
homotopy pullback of $f$ and $g$, followed by the standard homotopy pushout of the projections on $A$ and $B$.
To provide a concrete example, suppose that $f$ (or $g$) is a fibration. In such a case,
the honest pullback of $f$ and $g$ becomes a homotopy pullback. By taking the standard homotopy pushout of the
projections $A\times_C B\to A$ and $A\times_C B\to B$, we obtain a representative of the map $A*_C B\to C$ of the form:
$$\begin{array}{rcl}
A\ast_C B=A\amalg B \amalg (A\times_C B\times [0,1]) /\sim &\longrightarrow & C\\
&&\\
\langle a,b,t\rangle & \mapsto & f(a)=g(b)\\
\end{array}$$
where $\sim$ is defined by $(a,b,t)\sim a$ if $t=0$ and $(a,b,t)\sim b$ if $t=1$. It is worth noting that this explicit
construction coincides, up to homeomorphism, with the notion of sum of two fibrations used by Schwarz \cite{Sch}.

A Ganea-Schwarz-type characterization of the sectional category exists, outlined as follows:
Given any map $p:E\rightarrow B$, we can generate the join map $E*_B E\rightarrow B$ by combining $p$ with itself.
This process can be repeated iteratively, creating the join of this resulting map with $p$ once again. Thus, inductively,
we obtain $j^n_p:*^n_B E\rightarrow B$ as the join of $j^{n-1}_p$ and $p$ (with $j^0_p=p$ and $*^0_B E=E$).

\begin{theorem}\cite[Th. 2.2]{FGKV} \label{ax}
Let $p:E\rightarrow B$ be a map. If $B$ is normal,
then one has \secat$(p)\leq n$ if and only if $j^n_p$ admits a homotopy
section.
\end{theorem}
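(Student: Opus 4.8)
The plan is to reduce to the case where $p$ is a fibration and then run Schwarz's classical argument, which relates honest sections of the iterated fibrewise join to open covers by sectional subsets; normality of $B$ is exactly what produces the partition of unity needed in one of the two directions.

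First I would replace $p\colon E\to B$ by its associated mapping-path fibration $p'\colon E'\to B$, which comes with a homotopy equivalence $j\colon E\to E'$ satisfying $p'\circ j=p$. Applying part (a) of the sectional-category lemma to the square with horizontal maps $j$ and $\mathrm{id}_B$ gives $\secat(p)=\secat(p')$. Since the join construction is homotopy invariant, $j$ induces a homotopy equivalence $\Phi\colon {*}^n_B E\to {*}^n_B E'$ lying over $B$, i.e.\ $j^n_{p'}\circ\Phi\simeq j^n_p$; choosing a homotopy inverse $\Psi$ one checks $j^n_p\circ\Psi\simeq j^n_{p'}$, so that pre-composition with $\Phi$ or with $\Psi$ transports a homotopy section of one join map to a homotopy section of the other. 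Hence $j^n_p$ admits a homotopy section if and only if $j^n_{p'}$ does, and it suffices to prove the theorem for $p'$. For the fibration $p'$ the map $j^n_{p'}$ is again a fibration, so ``admits a homotopy section'' is equivalent to ``admits an honest section''.

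For the fibration $p'$ I would use the explicit model of the $(n{+}1)$-fold fibrewise join: a point of ${*}^n_B E'$ over $b\in B$ is a formal convex combination $\sum_{i=0}^n t_i\,x_i$ with $x_i\in (p')^{-1}(b)$, $t_i\ge 0$ and $\sum_i t_i=1$, where $x_i$ is forgotten when $t_i=0$. This carries continuous barycentric-coordinate functions $t_i\colon {*}^n_B E'\to[0,1]$ and, on each open set $\{t_i>0\}$, an evaluation map recording $x_i\in E'$. For the implication ``section $\Rightarrow$ small sectional category'', given a section $\sigma$ of $j^n_{p'}$ I set $U_i=\{b\in B:\ t_i(\sigma(b))>0\}$; these are open, they cover $B$ because the $t_i$ sum to $1$, and on $U_i$ the evaluation map yields a genuine local section $U_i\to E'$ of $p'$, so $\secat(p')\le n$ (this direction uses no hypothesis on $B$). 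For the converse, I write $B=U_0\cup\cdots\cup U_n$ with local sections $s_i\colon U_i\to E'$; here normality of $B$ enters, furnishing a partition of unity $\{\rho_i\}$ subordinate to this finite cover, and then $\sigma(b):=\sum_{i=0}^n \rho_i(b)\,s_i(b)$ is a well-defined continuous section of $j^n_{p'}$, the $i$-th summand being ignored precisely when $\rho_i(b)=0$, so that $s_i(b)$ need not be defined outside $U_i$.

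The step I expect to be the most delicate is the reduction to fibrations: one must verify that $j$ really induces a homotopy equivalence of joins over $B$ (iterating the fact that homotopy pullbacks and homotopy pushouts preserve equivalences), and that ``admitting a homotopy section'' is invariant under such fibrewise equivalences, since this is what lets the general non-fibration statement follow from Schwarz's fibration case. The partition-of-unity construction and the continuity of $\sigma$ are then routine once normality is invoked, and the extraction of local sections from the coordinate functions $t_i$ is immediate from the explicit model.
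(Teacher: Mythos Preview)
The paper does not supply a proof of this statement; it is quoted verbatim as \cite[Th.~2.2]{FGKV} and used as a black box in the Whitehead--Ganea characterization that follows. So there is nothing in the paper to compare your argument against.

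That said, your proposal is correct and is essentially the argument one finds in the cited reference and in Schwarz's original work: reduce to a fibration via the mapping-path construction, invoke homotopy invariance of the iterated join to transport homotopy sections, and then run the classical two-way argument on the explicit barycentric model of the fibrewise join, with normality supplying the partition of unity for the direction $\secat(p')\le n \Rightarrow j^n_{p'}$ has a section. Your identification of the delicate step is accurate: the reduction hinges on the fact that a homotopy equivalence $E\to E'$ over $B$ induces a homotopy equivalence of iterated joins over $B$, which follows by iterating the preservation of equivalences under homotopy pullback and homotopy pushout. One small remark: for the partition-of-unity step you implicitly use that normal spaces admit shrinkings of finite open covers, which is standard but worth stating, since normality alone does not give paracompactness.
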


\begin{remark}
Using homotopy invariance, one can easily verify that if $p:E\rightarrow B$ is a map and $B$ has the homotopy type of a normal space (for instance, a space with the homotopy type of a CW-complex), then $\secat(p)\leq n$ if and only if $j^n_p$ admits a homotopy section.
\end{remark}

There is also a Whitehead-type characterization for the sectional category of a continuous map. This was provided by A. Fass\`{o}
\cite{F} and by J. Calcines and L. Vandembroucq in \cite{C-V}. The \emph{$n$-sectional fat wedge} of any map $p:E\rightarrow
B$
$$\kappa _n:T^n(p)\rightarrow B^{n+1}$$ \noindent is defined inductively as follows: setting $\kappa _0=p:E\rightarrow B$, and having $\kappa _{n-1}:T^{n-1}(p)\rightarrow B^n$ defined, we obtain $\kappa_n$ by considering the join of $\kappa _{n-1}\times id_B$ and $id_{B^n}\times p$:
$$\xymatrix@C=0.5cm@R=0.6cm{ {\bullet } \ar[rr] \ar[dd] & & {B^n\times E}
\ar[dl] \ar[dd]^{id_{B^n}\times p} \\
 & {T^n(p)} \ar@{.>}[dr]^{\kappa _n} & \\
 {T^{n-1}(p)\times B} \ar[ur] \ar[rr]_{\kappa _{n-1}\times id_B} & & {B^{n+1}.} }$$

\begin{theorem}\cite[Th. 8, Cor. 9]{C-V}\label{conex}
Let $p:E\rightarrow B$ be any map and $n\geq 0$ be any nonnegative integer. Then, there exists a homotopy pullback
$$\xymatrix{
{*^n_B E} \ar[d]_{j^n_p} \ar[rr] & &
{T^n(p)} \ar[d]^{\kappa _n} \\ {B} \ar[rr]_{\Delta _{n+1}} & &
{B^{n+1}.} }$$
As a consequence, if $B$ is a normal space, then $\secat (p)\leq n$ if, and
only if, there is, up to homotopy, a lift of the $(n+1)$-diagonal
map
$$\xymatrix{
{} & {T^n(p)} \ar[d]^{\kappa _n} \\
{B} \ar@{.>}[ur] \ar[r]_{\Delta _{n+1}} & {B^{n+1}.} }$$
\end{theorem}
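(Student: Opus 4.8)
The plan is to prove the two assertions in turn, deriving the second formally from the first together with Theorem~\ref{ax}. For the homotopy pullback square I would proceed by induction on $n$. The base case $n=0$ is immediate: there $*^0_B E=E=T^0(p)$, both $j^0_p$ and $\kappa_0$ equal $p$, and $\Delta_1=id_B$, so the square is trivially a homotopy pullback. For the inductive step I assume that
$$\xymatrix{ {*^{n-1}_B E} \ar[d]_{j^{n-1}_p} \ar[r] & {T^{n-1}(p)} \ar[d]^{\kappa_{n-1}} \\ {B} \ar[r]_{\Delta_n} & {B^n} }$$
is a homotopy pullback. The structural fact I would invoke is that base change along a fixed map preserves the join construction: since the join is a homotopy pushout of a homotopy pullback, and homotopy pullbacks of spaces commute with homotopy pushouts (Mather's cube theorem), the homotopy pullback of $\kappa_n$ along $\Delta_{n+1}$ coincides with the join over $B$ of the homotopy pullbacks, taken along $\Delta_{n+1}$, of the two maps $\kappa_{n-1}\times id_B$ and $id_{B^n}\times p$ whose join defines $\kappa_n$.

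It then remains to identify these two pulled-back maps. Here I would use the factorization $\Delta_{n+1}=(\Delta_n\times id_B)\circ\delta$, with $\delta:B\to B\times B$ the diagonal, together with the pasting law for homotopy pullbacks. Pulling the product map $\kappa_{n-1}\times id_B$ back along $\Delta_n\times id_B$ splits as a product of pullbacks and, by the inductive hypothesis, yields $j^{n-1}_p\times id_B$; a further pullback along $\delta$ is a homotopy-equalizer computation collapsing the free $B$-factor and returning $j^{n-1}_p:*^{n-1}_B E\to B$. The identical two steps applied to $id_{B^n}\times p$ return $p:E\to B$. Since $j^n_p$ is by definition the join over $B$ of $j^{n-1}_p$ and $p$, this identifies the pullback of $\kappa_n$ along $\Delta_{n+1}$ with $j^n_p$, closing the induction.

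For the consequence I would combine the square just obtained with Theorem~\ref{ax}. Assuming $B$ normal, that theorem gives $\secat(p)\leq n$ if and only if $j^n_p$ admits a homotopy section, so it suffices to show that $j^n_p$ has a homotopy section precisely when $\Delta_{n+1}$ lifts through $\kappa_n$ up to homotopy. If $\sigma$ is a homotopy section of $j^n_p$, composing it with the top edge of the square produces a homotopy lift of $\Delta_{n+1}$, using the homotopy commutativity of the square and $j^n_p\circ\sigma\simeq id_B$. Conversely, a homotopy lift $L$ of $\Delta_{n+1}$ through $\kappa_n$, inserted into the weak universal property of the homotopy pullback square alongside $id_B$, yields a map $B\to *^n_B E$ whose composite with $j^n_p$ is homotopic to $id_B$, i.e.\ a homotopy section.

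I expect the main obstacle to lie in the inductive step of the first assertion, namely the careful bookkeeping showing that base change commutes with the iterated join and that the two factor pullbacks are identified correctly. The homotopy-equalizer collapses of the spurious $B$-factors, although routine, are exactly the point where one must track which projection survives as the map down to $B$, so that the surviving maps are genuinely $j^{n-1}_p$ and $p$ rather than some homotopic but misidentified variants.
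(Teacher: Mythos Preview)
Your argument is sound. The inductive strategy---using that base change along $\Delta_{n+1}$ preserves the join because homotopy pullbacks commute with homotopy pushouts (Mather's cube theorem), then factoring $\Delta_{n+1}=(\Delta_n\times id_B)\circ\delta$ and applying pasting plus the inductive hypothesis to identify the two pulled-back legs as $j^{n-1}_p$ and $p$---is exactly the standard route to this result, and your derivation of the consequence from Theorem~\ref{ax} via the weak universal property of the pullback square is correct.

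However, you should be aware that the paper does \emph{not} supply a proof of this theorem at all: it is quoted verbatim from \cite[Th.~8, Cor.~9]{C-V} and used as a black box. So there is no ``paper's own proof'' to compare against here. What you have written is in effect a reconstruction of the argument one finds in that reference, and it matches the approach taken there. The only minor remark is that the direction ``homotopy pullbacks commute with homotopy pushouts'' requires care about which is the pushout face and which are the pullback faces in the cube; you have the correct orientation (pulling back a join yields a join), but in a formal write-up it is worth stating explicitly which version of Mather's theorem you are invoking.
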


Applying this Whitehead-Ganea-type characterization to the sectional category, we obtain a characterization of the same type for the relative sectional category.

\begin{proposition}
Let $A\stackrel{p}{\longrightarrow }X\stackrel{\varphi }{\longleftarrow }K$ be a cospan of continuous maps, where $K$ is a normal space. Then, the following statements are equivalent:
\begin{enumerate}
\item $\secat _{\varphi }(p)\leq n$.

\item There exists a map $\sigma :K\rightarrow *^n_XA$ such that $j^n_p\circ \sigma \simeq \varphi $:
$$\xymatrix{
 & {*^n_XA} \ar[d]^{j^n_p} \\
 {K} \ar@{.>}[ur]^{\sigma } \ar[r]_{\varphi } & {X.}
}$$

\item There exists a map $\sigma :K\rightarrow T^n(p)$ such that $\kappa _n\circ \sigma \simeq \Delta _{n+1}\circ \varphi $:
$$\xymatrix{
{K} \ar@{.>}[r]^(.4){\sigma } \ar[d]_{\varphi } &  {T^n(p)} \ar[d]^{\kappa _n} \\
 {X} \ar[r]_{\Delta _{n+1}} & {X^{n+1}.}
}$$
\end{enumerate}
\end{proposition}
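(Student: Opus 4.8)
The plan is to reduce both equivalences to the absolute Whitehead--Ganea characterizations (Theorems \ref{ax} and \ref{conex}) by means of Proposition \ref{pullback}; the only genuinely new ingredient is a compatibility lemma asserting that the iterated join construction commutes with homotopy pullback.

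First I would record the following key lemma: starting from the defining homotopy pullback of $\overline{p}$ along $\varphi$, the induced square
$$\xymatrix{
{*^n_K P_{\varphi ,p}} \ar[r]^{\Theta _n} \ar[d]_{j^n_{\overline{p}}} & {*^n_X A} \ar[d]^{j^n_p} \\
{K} \ar[r]_{\varphi } & {X}
}$$
is again a homotopy pullback, for every $n\geq 0$. I would prove this by induction on $n$. The case $n=0$ is exactly the homotopy pullback defining $\overline{p}$. For the inductive step, recall that $*^n_X A$ is the homotopy pushout of a span $*^{n-1}_X A\leftarrow Q_X\rightarrow A$, where $Q_X$ is the homotopy pullback of $j^{n-1}_p$ and $p$ over $X$; likewise $*^n_K P_{\varphi ,p}$ arises from a span over $K$. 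Pulling the whole pushout diagram back along $\varphi $ and using that homotopy pullbacks are stable under homotopy pullback, the inductive hypothesis identifies the pulled-back span with $*^{n-1}_K P_{\varphi ,p}\leftarrow Q_K\rightarrow P_{\varphi ,p}$. The crucial point, which I expect to be the \emph{main obstacle}, is that homotopy pullback along $\varphi $ preserves the homotopy pushout (universality of homotopy colimits in the category of spaces, i.e. Mather's cube theorem); hence the homotopy pullback of $*^n_X A$ along $\varphi $ is the homotopy pushout of the pulled-back span, namely $*^n_K P_{\varphi ,p}$, and a check that the structure maps to $K$ assemble into $j^n_{\overline{p}}$ completes the induction.

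With the lemma in hand, the equivalence $(1)\Leftrightarrow(2)$ is immediate. By Proposition \ref{pullback} we have $\secat _{\varphi }(p)=\secat (\overline{p})$, and since $K$ is normal, Theorem \ref{ax} gives $\secat (\overline{p})\leq n$ if and only if $j^n_{\overline{p}}$ admits a homotopy section $s:K\rightarrow *^n_K P_{\varphi ,p}$. Composing with the top map of the lemma's square yields $\sigma =\Theta _n\circ s$ with $j^n_p\circ \sigma \simeq \varphi \circ j^n_{\overline{p}}\circ s\simeq \varphi $, which is statement $(2)$. Conversely, any $\sigma $ as in $(2)$ makes the square with $\varphi $ and $\mathrm{id}_K$ homotopy commute, so the weak universal property of the homotopy pullback in the lemma produces a whisker map $K\rightarrow *^n_K P_{\varphi ,p}$ that is a homotopy section of $j^n_{\overline{p}}$.

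Finally, for $(2)\Leftrightarrow(3)$ I would invoke Theorem \ref{conex} applied to $p:A\rightarrow X$, which exhibits
$$\xymatrix{
{*^n_X A} \ar[d]_{j^n_p} \ar[r] & {T^n(p)} \ar[d]^{\kappa _n} \\
{X} \ar[r]_{\Delta _{n+1}} & {X^{n+1}}
}$$
as a homotopy pullback. Given $\sigma $ as in $(2)$, composing with the top horizontal map gives $\sigma ':K\rightarrow T^n(p)$ with $\kappa _n\circ \sigma '\simeq \Delta _{n+1}\circ j^n_p\circ \sigma \simeq \Delta _{n+1}\circ \varphi $, which is $(3)$. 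Conversely, a map $\sigma '$ as in $(3)$ makes the outer square with $\varphi $ and $\Delta _{n+1}$ homotopy commute, so the weak universal property of this homotopy pullback produces $\sigma :K\rightarrow *^n_X A$ with $j^n_p\circ \sigma \simeq \varphi $, recovering $(2)$. This closes the cycle of equivalences.
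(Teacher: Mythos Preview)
Your proposal is correct and follows essentially the same route as the paper: both arguments hinge on the homotopy pullback square relating $j^n_{\overline{p}}$ and $j^n_p$ over $\varphi$, then invoke Theorem~\ref{ax} for $(1)\Leftrightarrow(2)$ and the homotopy pullback of Theorem~\ref{conex} for $(2)\Leftrightarrow(3)$. The only difference is that you supply an inductive proof of that key square via Mather's cube theorem, whereas the paper simply asserts it.
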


\begin{proof}
The equivalence between 1. and 2. comes from Theorem \ref{ax} and the weak universal property of the following homotopy pullback:
$$\xymatrix{
{*^n_K P_{\varphi ,p}} \ar[rr] \ar[d]_{j^n_{\overline{p}}} & & {*^n_X A} \ar[d]^{j^n_p} \\
{K} \ar[rr]_{\varphi } & & {X.} }$$
The equivalence between 2. and 3. is due the weak universal property of the homotopy pullback established in Theorem \ref{conex} applied to the map $p:A\rightarrow X$.
\end{proof}

\section{Generalized relative sectional category}
The study of the relative sectional category becomes more intriguing when we broaden our scope beyond open sets to include arbitrary subsets in its definition.
To this end, we define $\secat_ g(p)$, the generalized sectional category of a continuous map $p:E\rightarrow B$, as the smallest non-negative integer $n$ (or infinity if such an integer does not exist) such that $B$ can be covered by $n+1$ subsets, on each of which there exists a local homotopy section of $p.$ This is a homotopy invariant, as the next result assures:

\begin{proposition}\label{invariance}
Let us consider the following homotopy commutative diagram, in which $\alpha$ and $\beta$ are homotopy equivalences:
$$\xymatrix{
{X} \ar[r]^{\alpha }_{\simeq } \ar[d]_f & {X'} \ar[d]^{f'} \\
{Y} \ar[r]^{\simeq }_{\beta } & {Y'.} }$$ In this situation, we have $\secat_g(f) = \secat_g(f')$ (and $\secat (f) = \secat (f')$).
\end{proposition}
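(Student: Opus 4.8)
The plan is to reduce the statement about $\secat_g$ (and $\secat$) to a statement about transporting local homotopy sections across the homotopy equivalences $\alpha$ and $\beta$. By symmetry of the hypotheses it suffices to prove one inequality, say $\secat_g(f') \le \secat_g(f)$, and then obtain the reverse by exchanging the roles of the two squares (using homotopy inverses $\alpha^{-1}, \beta^{-1}$, which fit into an analogous homotopy commutative square). So the whole argument comes down to: given a subset $V \subseteq Y'$ on which $f'$ admits a local homotopy section, produce a subset of $Y$ with a local homotopy section of $f$, in a way that turns an $(n+1)$-element covering of $Y'$ into an $(n+1)$-element covering of $Y$.

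First I would fix homotopy inverses $\overline{\alpha}:X' \to X$ and $\overline{\beta}:Y' \to Y$ together with the relevant homotopies $\overline{\alpha}\alpha \simeq \mathrm{id}$, etc., and record the homotopy $\beta f \simeq f' \alpha$ coming from the square. Then, starting from a covering $Y' = V_0 \cup \cdots \cup V_n$ where each $V_j$ carries a local homotopy section $s_j : V_j \to X'$ of $f'$ (so $f' s_j \simeq \mathrm{inc}_{V_j}$), I would set $U_j := \overline{\beta}^{-1}(V_j) \subseteq Y$, which again gives a covering of $Y$ of the same cardinality since $\overline{\beta}$ is a (continuous) map and the $V_j$ cover $Y'$. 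On each $U_j$ I would define the candidate local section of $f$ as the composite $\overline{\alpha} \circ s_j \circ (\overline{\beta}|_{U_j}) : U_j \to X$, and the task is to check that $f$ applied to this composite is homotopic to the inclusion $U_j \hookrightarrow Y$.

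The verification is a routine chase of homotopies: using $f \overline{\alpha} \simeq \overline{\beta} f'$ (obtained from $\beta f \simeq f'\alpha$ by composing with the inverses and cancelling up to homotopy), one gets $f \overline{\alpha} s_j \overline{\beta} \simeq \overline{\beta} f' s_j \overline{\beta} \simeq \overline{\beta} \, \mathrm{inc}_{V_j} \, \overline{\beta} = \overline{\beta} \circ \overline{\beta}|_{U_j}$, and since $\overline{\beta}\,\overline{\beta}|_{U_j}$ is the restriction to $U_j$ of $\overline{\beta}\overline{\beta}$ which is homotopic to nothing canonical — this is exactly where care is needed. The cleaner route, which I would adopt instead, is to take $U_j := \beta^{-1}(V_j)$ (a covering of $Y$ because $\beta$ is a map and the $V_j$ cover $Y'$), and define the section on $U_j$ as $\overline{\alpha} \circ s_j \circ \beta|_{U_j}$; then $f \overline{\alpha} s_j \beta \simeq \overline{\beta} f' s_j \beta \simeq \overline{\beta}\,\beta|_{U_j} \simeq \mathrm{inc}_{U_j}$, where the last homotopy uses $\overline{\beta}\beta \simeq \mathrm{id}_Y$ restricted to $U_j$. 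Thus $U_j$ is a local-homotopy-section subset for $f$, giving $\secat_g(f) \le \secat_g(f')$, and the reverse inequality follows identically after swapping primes and replacing $(\alpha,\beta)$ by inverses.

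The main obstacle is bookkeeping rather than conceptual: one must make sure the restricted homotopies compose correctly (a restriction of a homotopy is a homotopy, so no continuity issues arise) and that taking preimages under $\beta$ genuinely preserves the number of pieces of the cover — both facts are elementary. For the parenthetical claim about $\secat$ rather than $\secat_g$, the identical argument works provided the sets $V_j$ are \emph{open}: since $\beta$ is continuous, $U_j = \beta^{-1}(V_j)$ is open whenever $V_j$ is open, so openness of the cover is preserved, and the whole proof goes through verbatim with ``subset'' replaced by ``open subset.'' No CW or ANR hypotheses are needed here, as everything is purely formal manipulation of homotopies and continuous preimages.
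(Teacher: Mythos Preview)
The paper does not supply a proof of this proposition: it is stated as a standard fact (the $\secat$ case already appears, also unproved, as part (a) of the lemma opening Section~1). Your argument---pull back the cover along $\beta$, compose the given local sections with the homotopy inverses, and verify the composite is a local homotopy section via the chain $f\overline{\alpha}s_j\beta|_{U_j} \simeq \overline{\beta}f's_j\beta|_{U_j} \simeq \overline{\beta}\beta|_{U_j} \simeq \mathrm{inc}_{U_j}$---is the standard one and is correct, including the observation that continuity of $\beta$ preserves openness of the cover for the $\secat$ version.

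Two minor expository wrinkles worth cleaning up: the inequality you announce at the outset ($\secat_g(f') \le \secat_g(f)$) is the reverse of what your construction actually delivers, although you do state the correct conclusion at the end; and in your abandoned first route the expression $\overline{\beta}^{-1}(V_j)$ is ill-typed, since $\overline{\beta}:Y'\to Y$ while $V_j\subseteq Y'$---presumably the reason you switched to $\beta^{-1}(V_j)$. Neither issue affects the soundness of the final argument.
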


In many cases, generalized sectional category aligns with the conventional sectional category, particularly under conditions that are not overly restrictive.

\begin{theorem}\cite[Th. 2.7]{GC}\label{chulo}
Let $p:E\rightarrow B$ be a fibration where $E$ and $B$ are ANR spaces. Then, $\secat(p)=\secat _g(p).$
\end{theorem}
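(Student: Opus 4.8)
The plan is to prove the two inequalities separately. The inequality $\secat_g(p)\leq\secat(p)$ is immediate, since every open cover by local-homotopy-sectional sets is in particular a cover by arbitrary such sets. All the content therefore lies in the reverse inequality $\secat(p)\leq\secat_g(p)$, which I would reduce to the following thickening statement: \emph{whenever a subset $A\subseteq B$ admits a local homotopy section of $p$, there is an open set $U$ with $A\subseteq U\subseteq B$ that also admits a local homotopy section}. Granting this, if $B=A_0\cup\cdots\cup A_n$ is a cover by $n+1$ arbitrary sectional subsets realizing $\secat_g(p)=n$, I thicken each $A_i$ to an open sectional $U_i\supseteq A_i$; the family $\{U_i\}_{i=0}^n$ is then an open cover of $B$ by $n+1$ sectional open sets, whence $\secat(p)\leq n$, and equality follows.

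Before thickening I would first replace each homotopy section by an honest one, exploiting that $p$ is a fibration. If $s\colon A\to E$ satisfies $p\circ s\simeq(A\hookrightarrow B)$ through a homotopy $G\colon A\times I\to B$ with $G_0=p\circ s$ and $G_1$ the inclusion, the homotopy lifting property produces $\widetilde{G}\colon A\times I\to E$ with $\widetilde{G}_0=s$ and $p\circ\widetilde{G}=G$; then $s'=\widetilde{G}_1$ is a genuine section, $p\circ s'=(A\hookrightarrow B)$. The thickening itself is where the ANR hypotheses enter. Using that $E$ is an ANR I would extend $s'$ to a map $\widetilde{s}\colon U\to E$ on a neighbourhood $U$ of $A$ (neighbourhood extension property of ANR targets). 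Since $p\circ\widetilde{s}$ agrees with the inclusion on $A$, continuity lets me shrink $U$ so that $p\circ\widetilde{s}\colon U\to B$ is as close to the inclusion $U\hookrightarrow B$ as desired; because $B$ is an ANR, sufficiently close maps into $B$ are homotopic, so $p\circ\widetilde{s}\simeq(U\hookrightarrow B)$ and $\widetilde{s}$ is a local homotopy section over the open set $U$. Equivalently, one may produce a neighbourhood $U$ together with a map $\rho\colon U\to A$ homotopic in $B$ to the inclusion and take $s'\circ\rho$, since then $p\circ s'\circ\rho=\rho\simeq(U\hookrightarrow B)$.

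The delicate point, and the step I expect to be the main obstacle, is the neighbourhood extension when $A$ is an arbitrary, possibly non-closed, subset of $B$: the classical ANR extension theorem delivers neighbourhood extensions for \emph{closed} domains, and for dense subsets naive extension genuinely fails (a map continuous on $A$ need not extend over any neighbourhood). Overcoming this is exactly what forces the full strength of the hypotheses, and is where I would expect to spend the real work: one uses that $B$ is a metrizable, hence locally contractible, ANR to obtain the requisite local control and to legitimize the ``nearby maps are homotopic'' principle, together with the fibration property to convert homotopy sections into sections. A natural way to package this rigorously is to first pass to a locally finite open refinement of the given cover and build the extension over a controlled neighbourhood, or to invoke the uniform domination of maps into the ANR $E$; in any event the argument does not survive the removal of either ANR assumption, which is precisely the reason the theorem is stated for $E$ and $B$ both ANR. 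Once the thickening lemma is secured, assembling the open cover $\{U_i\}$ and combining with the trivial inequality yields $\secat(p)=\secat_g(p)$.
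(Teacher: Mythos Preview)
The paper does not supply its own proof of this theorem: it is quoted verbatim from \cite[Th.~2.7]{GC} and used as a black box (see also Remark~\ref{Miyata} and Proposition~\ref{fantastico}, which merely extend its scope via homotopy invariance). There is therefore no in-paper argument to compare your proposal against.

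On the substance of your outline: the reduction to a ``thickening lemma'' and the use of the fibration hypothesis to replace homotopy sections by genuine sections are both correct and standard. The gap you yourself flag is, however, genuine and is not closed by what you write. The neighbourhood extension property of an ANR target $E$ applies to maps defined on \emph{closed} subsets of a metrizable space; for an arbitrary $A\subseteq B$ a continuous $s'\colon A\to E$ need not extend over any neighbourhood of $A$ in $B$, and the ``nearby maps are homotopic'' principle for the ANR $B$ only becomes available \emph{after} such an extension exists. Your alternative suggestion---produce $\rho\colon U\to A$ homotopic in $B$ to the inclusion---likewise presupposes that $A$ is something like a neighbourhood retract, which an arbitrary subset of an ANR is not. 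The vague references to locally finite refinements or ``uniform domination'' do not, as stated, supply the missing step. In short, your proposal is a plausible plan with the hard part explicitly left open; to turn it into a proof you would need an honest argument bridging the non-closed case (for instance, by first arranging the sectional subsets to be closed, or by working with a suitable factorisation of $p$), which is precisely the content carried by the cited reference.
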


\begin{remark}\label{Miyata}
The preceding outcome applies equally to any continuous map $f:X\rightarrow Y$ between ANR spaces, not necessarily a fibration. A result by T. Miyata \cite[Th. 2.1]{Miy} asserts that $f$ can be decomposed as $f=pq$, where $q:X\rightarrow E$ is a homotopy equivalence with $E$ being an ANR space, and $p:E\rightarrow Y$ is a map with a property slightly stronger than the typical homotopy lifting property required for a Hurewicz fibration. Specifically, $p$ exhibits the so-called ``strong homotopy lifting property" (SHLP) with respect to every space. This implies that for any commutative diagram with solid arrows as described,
$$\xymatrix{
{Z} \ar[r]^h \ar[d]_{i_0} & {E} \ar[d]^p \\
{Z\times I} \ar[r]_H \ar@{.>}[ur]^{\widetilde{H}} & {Y}
}$$ \noindent there exists a map $\widetilde{H}:Z\times I\rightarrow E$ such that $\widetilde{H}i_0=h,$ $p\widetilde{H}=H$ and $\widetilde{H}$ is constant on $\{z\}\times I$ whenever $H$ is constant on $\{z\}\times I$.

Utilizing the homotopy invariance of $\secat $ and its generalized counterpart, as established in Proposition \ref{invariance} and Theorem \ref{chulo}, we deduce $\secat (f)=\secat _g(f)$. Extending this argument further and once again leveraging the homotopy invariance of $\secat$ and $\secat_g$, we can conclude that Theorem \ref{chulo} remains valid even when $f$ is any continuous map between spaces with the homotopy type of a CW-complex (or an ANR space).

All these comments are summarized in the next proposition.
\end{remark}

\begin{proposition}\label{fantastico}
Let $f:X\rightarrow Y$ be a map between spaces having the homotopy type of a CW-complex (or an ANR space). Then, $\secat(f)=\secat _g(f).$ \hfill $\square$
\end{proposition}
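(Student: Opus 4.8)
The plan is to reduce the general statement to the special case already established in Theorem \ref{chulo}, using the two tools assembled just above: the homotopy invariance of both $\secat$ and $\secat_g$ (Proposition \ref{invariance}) and T. Miyata's factorization (Remark \ref{Miyata}). Since $X$ and $Y$ have the homotopy type of a CW-complex, equivalently of an ANR, I may as well assume $X$ and $Y$ are themselves ANR spaces: pick homotopy equivalences to ANR spaces $X \simeq X''$ and $Y \simeq Y''$ and transport $f$ along them to a map $f'':X'' \to Y''$ sitting in a homotopy commutative square with $f$; Proposition \ref{invariance} then gives $\secat(f)=\secat(f'')$ and $\secat_g(f)=\secat_g(f'')$ simultaneously, so it suffices to prove the equality for $f''$. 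This first reduction is where care is needed, because transporting $f$ requires a genuinely homotopy commutative square, but this is routine once one fixes the homotopy inverses.

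Having reduced to $f:X \to Y$ with $X$ and $Y$ both ANR, I would apply Miyata's theorem to factor $f = p q$, where $q:X \to E$ is a homotopy equivalence onto an ANR space $E$ and $p:E \to Y$ enjoys the SHLP, in particular the ordinary homotopy lifting property, so $p$ is a (Hurewicz) fibration between the ANR spaces $E$ and $Y$. Now Theorem \ref{chulo} applies directly to $p$ and yields $\secat(p)=\secat_g(p)$. The remaining step is to pass from $p$ back to $f$. Since $q$ is a homotopy equivalence, the square
$$\xymatrix{
{X} \ar[r]^{q}_{\simeq } \ar[d]_f & {E} \ar[d]^{p} \\
{Y} \ar[r]^{\simeq }_{id_Y} & {Y} }$$
is homotopy commutative (indeed $p q = f$ on the nose), with the top horizontal map $q$ and the bottom horizontal map $id_Y$ both homotopy equivalences. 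Applying Proposition \ref{invariance} to this square gives $\secat(f)=\secat(p)$ and $\secat_g(f)=\secat_g(p)$.

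Stringing the three equalities together yields the claim: $\secat(f)=\secat(p)=\secat_g(p)=\secat_g(f)$. The logically delicate point is the very first reduction, namely that one is entitled to replace $X$ and $Y$ by honest ANR spaces without disturbing either invariant; this is precisely what Proposition \ref{invariance} guarantees, provided one is careful that the same homotopy commutative square controls $\secat$ and $\secat_g$ at once. The second potential obstacle is invoking Miyata's result correctly: one must check that the output space $E$ is again an ANR and that $p$ is an honest fibration, so that the hypotheses of Theorem \ref{chulo} are met; both are part of Miyata's statement as recalled in Remark \ref{Miyata}. Everything else is a formal concatenation of homotopy invariance with the fibration case, which is why the proof in the excerpt is condensed to a single line marked by $\square$.
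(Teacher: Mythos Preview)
Your proposal is correct and follows essentially the same route as the paper: the argument is precisely the content of Remark~\ref{Miyata}, namely first reduce via Proposition~\ref{invariance} to a map between honest ANR spaces, then apply Miyata's factorization $f=pq$ with $E$ an ANR and $p$ a fibration, invoke Theorem~\ref{chulo} on $p$, and transfer back to $f$ using the square with $q$ and $id_Y$. Your write-up is simply a more explicit unpacking of that remark, with no substantive deviation.
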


\bigskip
Now we want to extend the results above to the relative case:

\begin{definition}
Consider a cospan $K\stackrel{\varphi }{\longrightarrow }X\stackrel{p}{\longleftarrow }A$ of maps.
The \emph{generalized sectional category of $p$ relative to $\varphi $}, $\secat _{g;\varphi }(p)$, is the least non-negative integer $n$ such that there is a cover $K=Z_0\cup Z_1\cup \cdots \cup Z_n$ by $n+1$ $\varphi $-sectional subsets. Is such an integer does not exist then we set $\secat  _{g;\varphi }(p)=\infty .$
\end{definition}

A similar proof to that given in Proposition \ref{pullback} gives us the following result:

\begin{proposition}
Let $K\stackrel{\varphi }{\longrightarrow }X\stackrel{p}{\longleftarrow }A$ be a cospan of maps. Then $\secat _{g;\varphi }(p)=\secat _g(\overline{p})$, where $\overline{p}$ denotes the base change of the map $p$ in the homotopy pullback of $p$ along $f$:
$$\xymatrix{
{P_{\varphi ,p}} \ar[r]^{\overline{\varphi }} \ar[d]_{\overline{p}}  & {A} \ar[d]^p \\
{K} \ar[r]_{\varphi }  & {X.}  }$$ \hfill $\square$
\end{proposition}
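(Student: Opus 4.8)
The plan is to mirror the argument of Proposition \ref{pullback} almost verbatim, the only difference being that we drop the openness requirement on the covering sets and work with arbitrary subsets throughout. Concretely, I would establish a bijective correspondence between $\varphi$-sectional subsets of $K$ (in the sense of Definition 1.2, where $Z\subset K$ admits $s:Z\to A$ with $p\circ s\simeq\varphi_{|Z}$) and subsets of $K$ on which $\overline{p}$ admits a local homotopy section, and then observe that this correspondence preserves the covering sets pointwise, so that minimal cardinalities of such covers coincide.

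First I would recall the homotopy pullback square defining $\overline{p}:P_{\varphi,p}\to K$ and fix a subset $Z\subset K$. For the inequality $\secat_{g;\varphi}(p)\geq\secat_g(\overline{p})$, suppose $Z$ is $\varphi$-sectional, so there is $s:Z\to A$ with $p\circ s\simeq\varphi_{|Z}$. Exactly as in the proof of Proposition \ref{pullback}, the weak universal property of the homotopy pullback applied to the maps $s:Z\to A$ and $inc:Z\to K$ (together with the chosen homotopy) produces a whisker map $\sigma:Z\to P_{\varphi,p}$ with $\overline{p}\circ\sigma=inc$, i.e. a local homotopy section of $\overline{p}$ over $Z$. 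For the reverse inequality, given a local homotopy section $\sigma:Z\to P_{\varphi,p}$ of $\overline{p}$, the composite $\overline{\varphi}\circ\sigma:Z\to A$ (the base change of $\varphi$) satisfies $p\circ(\overline{\varphi}\circ\sigma)\simeq\varphi\circ\overline{p}\circ\sigma\simeq\varphi_{|Z}$ by the homotopy commutativity of the square, which exhibits $Z$ as $\varphi$-sectional.

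The crucial point — and the only place where one must be slightly careful — is that none of these constructions invokes the topology of $Z$ beyond that of a subspace: the whisker map and the composites are continuous for any subset $Z$, so the equivalence ``$Z$ is $\varphi$-sectional $\iff$ $\overline{p}$ has a local homotopy section over $Z$'' holds for arbitrary, not merely open, $Z$. Since a cover $K=Z_0\cup\cdots\cup Z_n$ consists of $\varphi$-sectional subsets precisely when each $Z_i$ carries a local homotopy section of $\overline{p}$, the two invariants are computed by minimizing over the same collection of covers, giving $\secat_{g;\varphi}(p)=\secat_g(\overline{p})$.

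I do not anticipate a genuine obstacle here, which is consistent with the statement being tagged ``A similar proof to that given in Proposition \ref{pullback}''. The one thing worth double-checking is the direction of the homotopies: in the reverse implication one must verify that $p\circ s\simeq\varphi_{|Z}$ follows from the homotopy $G$ filling the standard pullback square rather than from strict commutativity, but since Definition 1.2 only demands a \emph{homotopy} commutative triangle this causes no difficulty. The whole argument is therefore a routine transcription, and the proof can reasonably be left as the symbol $\square$ with the preceding sentence ``A similar proof to that given in Proposition \ref{pullback}'' serving as the justification.
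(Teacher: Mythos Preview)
Your proposal is correct and is exactly the approach the paper intends: the paper itself omits the proof, stating only that ``a similar proof to that given in Proposition \ref{pullback} gives us the following result'' and marking it with $\square$. Your transcription of that argument with arbitrary subsets in place of open ones is precisely what is meant.
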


In particular, following a similar proof as the one given in in Proposition \ref{h-invariant}, one can easily check that $\secat _{g;\varphi }(p)$ is a homotopy invariant:

\begin{proposition}
Consider the following homotopy commutative diagram
$$
\xymatrix{
{K} \ar[r]^{\varphi } \ar[d]^{\simeq }_{\alpha } & {X} \ar[d]^{\simeq }_{\beta}  & {A} \ar[d]^{\simeq }_{\gamma } \ar[l]_{p} \\
{L} \ar[r]_{\psi }  & {Y}  & {L}  \ar[l]^{q} }
$$
where $\alpha ,\beta $ and $\gamma $ are homotopy equivalences. Then $\secat _{g;\varphi }(p)=\secat _{g;\psi}(q).$

\hfill $\square$
\end{proposition}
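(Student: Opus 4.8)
The plan is to mirror the proof of Proposition \ref{h-invariant}, replacing $\secat$ by its generalized counterpart $\secat_g$ throughout. First I would invoke the weak universal property of the homotopy pullback to produce a whisker map $\theta\colon P_{\varphi,p}\rightarrow P_{\psi,q}$ fitting into a homotopy commutative cube whose top and bottom faces are the defining homotopy pullbacks of $\overline{p}$ and $\overline{q}$, and whose four vertical maps are $\alpha$, $\beta$, $\gamma$ together with $\theta$ itself. This is exactly the cube already drawn in the proof of Proposition \ref{h-invariant}.

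Next I would argue that $\theta$ is a homotopy equivalence. Since $\alpha$, $\beta$, and $\gamma$ are homotopy equivalences by hypothesis, the standard cube-comparison (gluing) lemma for homotopy pullbacks forces $\theta$ to be a homotopy equivalence as well; this is precisely the step carried out in Proposition \ref{h-invariant} and requires no modification here. I then obtain a homotopy commutative square
$$\xymatrix{
{P_{\varphi ,p}} \ar[r]^{\theta }_{\simeq } \ar[d]_{\overline{p}} & {P_{\psi ,q }} \ar[d]^{\overline{q}} \\
{K} \ar[r]^{\simeq }_{\alpha } & {L}
}$$
in which both horizontal maps, $\theta$ and $\alpha$, are homotopy equivalences.

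Finally, I would apply the homotopy invariance of the generalized sectional category established in Proposition \ref{invariance} to this square, which yields $\secat_g(\overline{p})=\secat_g(\overline{q})$. Combining this with the characterization $\secat_{g;\varphi}(p)=\secat_g(\overline{p})$ and $\secat_{g;\psi}(q)=\secat_g(\overline{q})$ furnished by the immediately preceding proposition gives the claimed equality $\secat_{g;\varphi}(p)=\secat_{g;\psi}(q)$.

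I do not anticipate a genuine obstacle. The only nontrivial ingredient is that $\theta$ is a homotopy equivalence, and this is identical to the corresponding step in Proposition \ref{h-invariant}. The single point demanding care is simply that the invariance statement I rely upon, Proposition \ref{invariance}, is the exact $\secat_g$-analogue of Lemma 1.1(a); once this substitution is made, the entire argument proceeds verbatim.
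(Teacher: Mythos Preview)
Your proposal is correct and follows precisely the route the paper intends: the paper gives no explicit proof here, merely indicating (just before the statement) that one should repeat the argument of Proposition~\ref{h-invariant}, and your write-up does exactly that, substituting Proposition~\ref{invariance} for the $\secat$-invariance and invoking the characterization $\secat_{g;\varphi}(p)=\secat_g(\overline{p})$ from the preceding proposition.
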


The subsequent result, which is significant for our objectives, is widely recognized. Nonetheless, due to the absence of an explicit proof in the existing literature (to the best of my knowledge), I have found it pertinent to provide a demonstration. We first introduce a lemma:

\begin{lemma}\label{lema1}
Let $X$ be a metrizable space and $f,g:X\rightarrow Z$ maps where $Z$ is an ANR space. Suppose $A\subset X$ is a closed subspace and there exists a homotopy $H:f_{|A}\simeq g_{|A}.$ Then, there exists an open subset $U\supset A$ and a homotopy $\widetilde{H}:f_{|U}\simeq g_{|U}$ such that $\widetilde{H}_{|A\times I}=H.$
\end{lemma}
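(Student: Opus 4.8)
The plan is to reduce the lemma to the fundamental neighbourhood extension property of ANR spaces, namely that a continuous map from a closed subspace of a metrizable space into an ANR extends over some open neighbourhood of that subspace. The whole difficulty is to encode the three constraints at once---the prescribed homotopy on $A$, together with the endpoint conditions $f$ and $g$ on all of $X$---as a single continuous map defined on an appropriate closed subspace of $X\times I$.

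First I would work in the product $W=X\times I$, which is metrizable because $X$ is metrizable and $I$ is compact metric. Inside $W$ I would consider the subspace
$$B=(A\times I)\cup(X\times\{0\})\cup(X\times\{1\}),$$
which is closed in $W$ since $A$ is closed in $X$ and $\{0\},\{1\}$ are closed in $I$. On $B$ I would define $F\colon B\to Z$ by setting $F=H$ on $A\times I$, $F(x,0)=f(x)$ on $X\times\{0\}$, and $F(x,1)=g(x)$ on $X\times\{1\}$. These three pieces agree on the overlaps $A\times\{0\}$ and $A\times\{1\}$ precisely because $H_0=f_{|A}$ and $H_1=g_{|A}$, while the two end-slices are disjoint; hence by the gluing lemma $F$ is a well-defined continuous map.

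Next, since $Z$ is an ANR and $B$ is closed in the metrizable space $W$, the neighbourhood extension property supplies an open set $V\subset W$ with $B\subset V$ and a continuous extension $\widetilde{F}\colon V\to Z$ satisfying $\widetilde{F}_{|B}=F$. The remaining step, which I expect to be the main obstacle, is that $V$ need not be a product neighbourhood, whereas the statement demands a homotopy defined on a set of the form $U\times I$. I would resolve this with the tube lemma: for each $a\in A$ the compact slice $\{a\}\times I$ lies in the open set $V$, so there is an open $U_a\ni a$ in $X$ with $U_a\times I\subset V$; taking $U=\bigcup_{a\in A}U_a$ yields an open neighbourhood of $A$ with $U\times I\subset V$.

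Finally I would set $\widetilde{H}=\widetilde{F}_{|U\times I}\colon U\times I\to Z$ and verify the three required properties directly from the definition of $F$: evaluating on $U\times\{0\}$ and $U\times\{1\}$ recovers $f_{|U}$ and $g_{|U}$ (since these slices lie in $B$), and restricting to $A\times I\subset B$ recovers $H$. Thus $\widetilde{H}$ is the desired homotopy $f_{|U}\simeq g_{|U}$ with $\widetilde{H}_{|A\times I}=H$.
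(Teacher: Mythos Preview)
Your proof is correct and follows essentially the same route as the paper's own argument: define the single map on the closed set $(A\times I)\cup(X\times\{0,1\})$, extend over an open neighbourhood using the ANR property of $Z$, then use compactness of $I$ (the tube lemma) to find a product neighbourhood $U\times I$ inside that open set. Your version is simply more explicit---you spell out the metrizability of $X\times I$, the gluing lemma, and the tube-lemma step---whereas the paper compresses the last step into the phrase ``utilizing the compactness of $I$''.
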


\begin{proof}
We define $\Phi :X\times \{0,1\}\cup A\times I\rightarrow Z$ as follows: $\Phi (x,0)=f(x),$ $\Phi (x,1)=g(x)$ and $\Phi (a,t)=H(a,t)$, for all $x\in X,$ $a\in A,$ $t\in I.$ As $Z$ is an ANR space, we can consider an open subset $W\supset X\times \{0,1\}\cup A\times I$ in $X\times I$ and an extension $\widetilde{\Phi }:W\rightarrow Z$
$$\xymatrix{
{X\times \{0,1\}\cup A\times I} \ar[rr]^(.6){\Phi } \ar@{^{(}->}[d] & & {Z} \\
{W} \ar[urr]_{\widetilde{\Phi}} & & .
}$$
Utilizing the compactness of $I$ we find an open subset $U\subset X$ such that $U\times I\subset W.$ Then, the restriction $\widetilde{H}:=\widetilde{\Phi }_{|U\times I}:U\times I\rightarrow Z$ satisfies the required conditions.
\end{proof}

\begin{proposition}\label{ANR}
Let us consider a cospan of maps $X\stackrel{f}{\longrightarrow}Z\stackrel{g}{\longleftarrow}Y$, where $X$, $Y$ and $Z$ are ANR spaces. Then,
$$E_{f,g}:=\{(x,y,\alpha)\in X\times Y\times Z^I:\alpha(0)=f(x), \alpha(1)=g(y)\}$$
\noindent the standard homotopy pullback of $f$ and $g$, is also an ANR.
\end{proposition}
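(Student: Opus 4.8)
The plan is to realize $E_{f,g}$ as a retract of an open subset of an ambient ANR, and then invoke the classical fact that a retract of an open subset of an ANR is again an ANR. First I would record the standard ingredients: since $Z$ is an ANR and $I$ is a compact polyhedron, the path space $Z^I$ (with the compact--open topology, which coincides with the sup-metric topology) is an ANR; consequently the product $M:=X\times Y\times Z^I$ is a metrizable ANR. The space $E_{f,g}$ sits inside $M$ as the closed subset cut out by the conditions $\alpha(0)=f(x)$ and $\alpha(1)=g(y)$, so the whole problem reduces to producing an open neighborhood $W\supseteq E_{f,g}$ in $M$ together with a retraction $r\colon W\to E_{f,g}$.

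To build $W$ and the ``endpoint data'' of the retraction I would apply Lemma \ref{lema1}. Consider the two maps $u,v\colon M\to Z\times Z$ defined by $u(x,y,\alpha)=(f(x),g(y))$ and $v(x,y,\alpha)=(\alpha(0),\alpha(1))$. They agree on the closed subset $E_{f,g}$, so taking $H$ to be the constant homotopy $u|_{E_{f,g}}=v|_{E_{f,g}}$, Lemma \ref{lema1} (with $Z\times Z$ in the role of the target ANR and $M$ metrizable) yields an open $W\supseteq E_{f,g}$ and a homotopy $\widetilde{H}=(\widetilde{H}_0,\widetilde{H}_1)\colon u|_W\simeq v|_W$ that is \emph{constant} over $E_{f,g}$. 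Writing $\gamma^0_w(s)=\widetilde{H}_0(w,s)$ and $\gamma^1_w(s)=\widetilde{H}_1(w,s)$ for $w=(x,y,\alpha)\in W$, this gives, continuously in $w$, a path $\gamma^0_w$ from $f(x)$ to $\alpha(0)$ and a path $\gamma^1_w$ from $g(y)$ to $\alpha(1)$, both of which degenerate to constant paths precisely when $w\in E_{f,g}$.

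Finally I would define the retraction by correcting the endpoints of $\alpha$ using these short paths, with a \emph{variable} width so that the correction disappears on $E_{f,g}$. Fixing a metric $d$ on $Z$, set $\epsilon(w)=\min\big(1/3,\ d(f(x),\alpha(0))+d(g(y),\alpha(1))\big)$, a continuous function that vanishes exactly on $E_{f,g}$, and put $r(x,y,\alpha)=(x,y,\beta_w)$ where
\[
\beta_w(t)=\begin{cases}
\gamma^0_w\big(t/\epsilon(w)\big), & 0\le t\le \epsilon(w),\\[2pt]
\alpha\Big(\dfrac{t-\epsilon(w)}{1-2\epsilon(w)}\Big), & \epsilon(w)\le t\le 1-\epsilon(w),\\[2pt]
\gamma^1_w\big((1-t)/\epsilon(w)\big), & 1-\epsilon(w)\le t\le 1,
\end{cases}
\]
with the convention $\beta_w=\alpha$ when $\epsilon(w)=0$. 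The pieces match at the junctions, and $\beta_w(0)=f(x)$, $\beta_w(1)=g(y)$, so $r(w)\in E_{f,g}$; moreover on $E_{f,g}$ we have $\epsilon(w)=0$ and hence $\beta_w=\alpha$, so $r$ restricts to the identity there. Thus $E_{f,g}$ is a retract of the open subset $W$ of the ANR $M$, and is therefore an ANR.

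The main obstacle is the continuity of $r$ as a map into $Z^I$, specifically along points where $\epsilon(w)\to 0^+$: there the outer intervals collapse and the reparametrization of the middle piece approaches the identity. This is exactly why the variable width $\epsilon(w)$ (rather than a fixed split into thirds) is essential, and why it is arranged to vanish on $E_{f,g}$ — it is what forces $\beta_w=\alpha$ on $E_{f,g}$ and hence makes $r$ an honest retraction. The convergence $\beta_w\to\alpha$ in the sup-metric as $w$ tends to $E_{f,g}$ follows because $\widetilde{H}$ is continuous and constant over $E_{f,g}$, so $\gamma^0_w$ and $\gamma^1_w$ have uniformly small diameter near $E_{f,g}$; verifying this uniform collapse is the one genuinely delicate estimate, while the remaining appeals to products, open subsets, and retracts of ANRs, and to the ANR property of $Z^I$, are standard.
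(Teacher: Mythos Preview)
Your argument is correct and takes a genuinely different route from the paper. The paper verifies the neighborhood extension property of $E_{f,g}$ directly: given a map $\phi\colon A\to E_{f,g}$ from a closed subset of an arbitrary metrizable space, it extends the $X$- and $Y$-components using that $X,Y$ are ANRs, and then applies Lemma~\ref{lema1} to extend the homotopy encoded by the $Z^I$-component; the three extended components reassemble into the desired extension. You instead exhibit $E_{f,g}$ as a neighborhood retract of the ambient ANR $M=X\times Y\times Z^I$, invoking Lemma~\ref{lema1} once, in the concrete situation $u,v\colon M\to Z\times Z$, to manufacture endpoint-correcting paths and then splicing them onto $\alpha$ with shrinking width. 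Your approach is more geometric and produces an explicit retraction, but it imports the (standard, non-trivial) fact that $Z^I$ is an ANR, whereas the paper's proof is self-contained in this respect. Both proofs ultimately rest on Lemma~\ref{lema1}, just deployed at different places: the paper uses it for arbitrary test pairs $(M,A)$, while you use it only for the single pair $(M,E_{f,g})$. Your flagged delicate point---continuity of $r$ where $\epsilon(w)\to 0$---does go through: since $\widetilde{H}$ is continuous on $W\times I$ and constant in the $I$-variable over $E_{f,g}$, compactness of $I$ forces the diameters of $\gamma^0_w,\gamma^1_w$ to vanish as $w\to E_{f,g}$, and the affine reparametrization of the middle piece converges to the identity, yielding $\beta_w\to\alpha$ in the sup-metric.
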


\begin{proof}
We observe that $E_{f,g}$, as a subspace of $X\times Y\times Z^I$, is metrizable.
Now, for a metrizable space $M$ and a map $\phi :A\rightarrow E_{f,g}$ from a closed subspace $A\subset M$, let $\phi _X:A\rightarrow X$, $\phi_Y:A\rightarrow Y$ and $\phi_Z:A\rightarrow Z^I$ be the maps derived from $\phi $ by composition with the corresponding projections.
Since $X$ and $Y$ are ANR spaces, the maps $\phi _X$ and $\phi _Y$ possess respective extensions
$\widetilde{\phi }_X:U_X\rightarrow X$ and $\widetilde{\phi}_Y:U_Y\rightarrow Y,$ where $U_X,U_Y\subset M$ are open neighbourhoods of $A$.

Considering $H:A\times I\rightarrow Z$ as the adjoint map of $\phi _Z$, we clearly have a homotopy $H:f\phi _X\simeq g\phi _Y$. Consequently
$$f(\widetilde{\phi}_X)_{|A}=f\phi_X\simeq g\phi_Y=g(\widetilde{\phi}_Y)_{|A}.$$
Let $N:=U_X\cap U_Y$, and without loss of generality, we can consider $f\widetilde\phi_X,g\widetilde\phi_Y :N\rightarrow Z$.
Notice that $A$ is a closed subspace of $N$. Then, using Lemma \ref{lema1} above, we find an open neighbourhood $U\supset A$ and a homotopy
$\widetilde{H}:U\times I\rightarrow Z$ such that $\widetilde{H}:f(\widetilde{\phi}_X)_{|U}\simeq g(\widetilde{\phi}_Y)_{|U}$ and $\widetilde{H}_{|A\times I}=H.$
Additionally, since $N$ is open in $M$, $U$ is also an open neighbourhood of $A$ in $M$.

Finally, by defining $\widetilde{\phi }_Z:U\rightarrow Z^I$ as the adjoint map of $\widetilde{H}$ it is straightforward to verify that $\widetilde{\phi }:U\rightarrow E_{f,g}$, defined as $\widetilde{\phi }:=(\widetilde{\phi }_X,\widetilde{\phi }_Y,\widetilde{\phi }_Z)$, establishes an extension
$$\xymatrix{
{A} \ar[r]^{\phi } \ar@{^{(}->}[d] & {E_{f,g}} \\
{U} \ar@{.>}[ur]_{\widetilde{\phi }} & .
}$$
\end{proof}

\begin{proposition}
If $K\stackrel{\varphi }{\longrightarrow }X\stackrel{p}{\longleftarrow }A$ is a cospan of maps, where $K$, $X$ and $A$ have the homotopy type of an ANR space (or, equivalently, of a CW-complex), then,
$$\secat _{\varphi }(p)=\secat _{g; \varphi }(p).$$
\end{proposition}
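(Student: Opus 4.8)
The plan is to reduce the relative equality to the non-relative one of Proposition \ref{fantastico}, applied to the base-change map over $K$. By Proposition \ref{pullback} and its generalized counterpart we already know $\secat_{\varphi}(p)=\secat(\overline{p})$ and $\secat_{g;\varphi}(p)=\secat_g(\overline{p})$, where $\overline{p}:P_{\varphi,p}\rightarrow K$ is the base change in the homotopy pullback. Hence it suffices to prove $\secat(\overline{p})=\secat_g(\overline{p})$, and for that, by Proposition \ref{fantastico}, it is enough to verify that both $K$ and the homotopy pullback space $P_{\varphi,p}$ have the homotopy type of an ANR. The space $K$ does so by hypothesis; the real content is the assertion for $P_{\varphi,p}$, and this is exactly where Proposition \ref{ANR} will enter, but only after passing to genuine ANRs.

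First I would replace the three given spaces by honest ANRs. By hypothesis there exist ANR spaces $K'$, $X'$, $A'$ and homotopy equivalences $u_K:K\rightarrow K'$, $u_X:X\rightarrow X'$, $u_A:A\rightarrow A'$. Fixing homotopy inverses $\overline{u_K}$ and $\overline{u_A}$, I set $\varphi':=u_X\varphi\,\overline{u_K}:K'\rightarrow X'$ and $p':=u_Xp\,\overline{u_A}:A'\rightarrow X'$. A direct check shows that the diagram
$$\xymatrix{
{K} \ar[r]^{\varphi} \ar[d]^{\simeq}_{u_K} & {X} \ar[d]^{\simeq}_{u_X} & {A} \ar[l]_p \ar[d]^{\simeq}_{u_A} \\
{K'} \ar[r]_{\varphi'} & {X'} & {A'} \ar[l]^{p'}
}$$
is homotopy commutative with all three vertical arrows homotopy equivalences. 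Invoking the homotopy invariance of both invariants, namely Proposition \ref{h-invariant} and its generalized version, then gives $\secat_{\varphi}(p)=\secat_{\varphi'}(p')$ and $\secat_{g;\varphi}(p)=\secat_{g;\varphi'}(p')$.

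Now all three spaces $K'$, $X'$, $A'$ are genuine ANRs, so I would use the standard homotopy pullback $E_{\varphi',p'}$ as the model for $P_{\varphi',p'}$; by Proposition \ref{ANR} this space is again an ANR. Thus the base-change map $\overline{p'}:E_{\varphi',p'}\rightarrow K'$ is a map between ANR spaces, and Proposition \ref{fantastico} yields $\secat(\overline{p'})=\secat_g(\overline{p'})$. Concatenating everything,
$$\secat_{\varphi}(p)=\secat_{\varphi'}(p')=\secat(\overline{p'})=\secat_g(\overline{p'})=\secat_{g;\varphi'}(p')=\secat_{g;\varphi}(p),$$
where the second and fifth equalities are Proposition \ref{pullback} and its generalized form applied over $K'$, completes the argument.

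The only substantive ingredient is Proposition \ref{ANR}: the homotopy pullback of a cospan of ANRs is again an ANR. Since that result concerns genuine ANRs rather than spaces merely of ANR homotopy type, the step I expect to require the most care is the reduction in the second paragraph, that is, transporting $\varphi$ and $p$ across the homotopy equivalences to $\varphi'$ and $p'$ and confirming that the resulting square is homotopy commutative so that Proposition \ref{h-invariant} (and its generalized analogue) applies. Once this bookkeeping is in place, the rest is a purely formal chain of previously established identities.
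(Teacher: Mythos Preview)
Your proposal is correct and follows essentially the same route as the paper: replace the cospan by one of genuine ANRs via homotopy equivalences, invoke Proposition \ref{ANR} to see the standard homotopy pullback is an ANR, and then apply Proposition \ref{fantastico} together with the homotopy invariance results. The only cosmetic difference is that the paper phrases the conclusion as ``$P_{\varphi,p}$ has the homotopy type of an ANR'' and applies Proposition \ref{fantastico} directly to $\overline{p}$, whereas you transport the equality through the ANR model $\overline{p'}$; the content is the same.
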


\begin{proof}
One can construct a homotopy commutative diagram
$$
\xymatrix{
{K} \ar[r]^{\varphi } \ar[d]^{\simeq } & {X} \ar[d]^{\simeq }  & {A} \ar[d]^{\simeq } \ar[l]_{p} \\
{L} \ar[r]_{\psi }  & {Y}  & {B}  \ar[l]^{q} }
$$ \noindent where the vertical maps are homotopy equivalences and $L$, $Y$ and $B$ are ANR spaces. This diagram gives rise to another homotopy commutative diagram
$$\xymatrix{
{P_{\varphi ,p}} \ar[r]^{\simeq } \ar[d]_{\overline{p}} & {E_{\psi, q}} \ar[d]^{\overline{q}}  \\
{K} \ar[r]_{\simeq } & {L.} }$$ \noindent By Proposition \ref{ANR} we conclude that $P_{\varphi ,p}$ also has the homotopy type of an ANR space. Therefore, by Remark \ref{Miyata} and/or Proposition \ref{fantastico}, we complete the proof.
\end{proof}

\bigskip
\textbf{Acknowledgments.}
I would like to express my gratitude to Professor T. Cutler for generously providing me with the result that the standard homotopy pullback of continuous maps between ANR spaces is also an ANR space. I also gratefully acknowledge Professor A. Murillo for his careful reading and valuable comments on a preliminary version of this work.

\bigskip
\textbf{Funding.}
This work is supported by the project PID2020-118753GB-I00 from the Spanish Ministry of Science and Innovation.


\begin{thebibliography}{99}


\bibitem{B-S-X} B. Boehnke, S. Scheirer, S. Xue. Relative topological complexity and configuration spaces. \emph{Bull. Iranian Math. Soc.} \textbf{48} (2022), no. 6, 3823--3837.

\bibitem{B-K} A. K. Bousfield, D. M. Kan. \textit{Homotopy limits, completions and localizations}. Lecture Notes in Mathematics \textbf{304}, 1972.

\bibitem{D} J. P. Doeraene. Homotopy pullbacks, homotopy pushouts and joins. \textit{Bull. Belg. Math. Soc.} \textbf{5}(1) (1998), 15-37.

\bibitem{F2} M. Farber. \textit{Invitation to Topological Robotics}. Zurich Lectures in Advanced Mathematics, Europ. Math. Soc. (EMS), Zurich, 2008.

\bibitem{F} A. Fass\`{o} Velenik. \textit{Relative homotopy invariants of the type of the Lusternik-Schnirelmann category}. Eingereichte Dissertation (Ph.D. Thesis), Freie Universit\"{a}t Berlin, 2002.

\bibitem{FGKV} L. Fernandez Suarez, P. Ghienne, T. Kahl, L. Vandembroucq. Joins of DGA modules and sectional category. \textit{Algebraic \& Geometric Topology} \textbf{6} (2006), 119-144.

\bibitem{GC} J. M. Garc\'{\i}a Calcines. A note on covers defining relative and sectional categories. \emph{Topol. Appl.} \textbf{265} (2019), 106810.

\bibitem{C-V} J. M. Garc\'{\i}a Calcines, L. Vandembroucq. Weak sectional category. \textit{Journal of the London Math. Soc.} \textbf{82}(3) (2010), 621-642.

\bibitem{GGV} J. Gonz\'alez, M. Grant, L. Vandembroucq. Hopf invariants for sectional category with applications to topological robotics. \textit{Q. J. Math.} \textbf{70} (2019), no. 4, 1209--1252.

\bibitem{Gr} M. Grant. Topological complexity, fibrations and symmetry. \emph{Topology Appl.} \textbf{159} (2012), 88--97.

\bibitem{MV-ML} E. Mac\'{\i}as-Virg\'os, D. Mosquera-Lois. Homotopic distance between maps. \emph{Math. Proc. Cambridge Philos. Soc.} \textbf{172} (2022), no. 1, 73--93.

\bibitem{M} M. Mather. Pull-backs in Homotopy Theory. \textit{Can. J. Math.} \textbf{28}(2) (1976), 225-263.

\bibitem{Miy} T. Miyata. Fibrations in the category of absolute neighborhood retracts. \emph{Bull. Pol. Acad. Sci. Math.} \textbf{55} (2007), no. 2, 145--154.

\bibitem{M-W} A. Murillo, J. Wu. Topological complexity of the work map. \emph{J. Topol. Anal.} \textbf{13} (2021), no. 1, 219--238.

\bibitem{Pav} P. Pave\v{s}i\'{c}. Monotonicity of the Schwarz genus. \emph{Proc. Amer. Math. Soc.} \textbf{148} (2020), no. 3, 1339--1349.

\bibitem{Sh} R. Short. Relative topological complexity of a pair. \emph{Topol. Appl.} \textbf{248} (2018), 7--23.

\bibitem{Sch} A. Schwarz. \emph{The genus of a fiber space}. A.M.S. Transl. \textbf{55} (1966), 49-140.

\bibitem{Sct} J. Scott. On the topological complexity of maps. \emph{Topology Appl.} \textbf{314} (2022), Paper No. 108094.

\end{thebibliography}
\end{document}